\documentclass[12pt]{amsart}       
%
\usepackage[top=40mm, bottom=25mm, left=35mm, right=35mm]{geometry}
\usepackage{amsmath}
\usepackage{graphicx}
\usepackage{amssymb,latexsym,tikz}
\usepackage{mathptmx}      
\usepackage{hyperref}
\usepackage{latexsym}
%


\theoremstyle{plain}
\newtheorem{theorem}{Theorem}[section]
\newtheorem{corollary}[theorem]{Corollary}

\newtheorem{lemma}[theorem]{Lemma}
\newtheorem{proposition}[theorem]{Proposition}

\theoremstyle{definition}
\newtheorem{definition}[theorem]{Definition}

\theoremstyle{remark}

\newtheorem{remark}[theorem]{Remark}
\newtheorem{example}[theorem]{Example}

















\newcommand{\sus}{\subseteq}

\newcommand{\pil}{\rightarrow}












\def\NN{{\mathbb N}}

\newcommand{\dist}{\text{dist}}

\definecolor{liblue}{RGB}{0,40,100}

%
\begin{document}

\title
[Partitions in trees and stacked simplicial complexes]
{Partitions of vertices and facets in trees and stacked simplicial complexes}

\author{Gunnar Fl{\o}ystad}
\address{Matematisk Institutt\\
         Postboks\\
         5020 Bergen}
       \email{gunnar@mi.uib.no}
       
\keywords{tree, partition, independent vertices, stacked simplicial complex, natural numbers, Stirling numbers, bijection}

\subjclass[2020]{Primary: 05C05 05C69, 05E45; Secondary: 05C70}

\maketitle

\begin{abstract}
 For stacked simplicial complexes, (special subclasses of such are:
 trees, triangulations of polygons, stacked polytopes with their
 triangulations),
  we give an explicit bijection between partitions of facets (for trees: edges),
  and partitions of vertices into independent sets.
  More generally we give bijections between facet partitions whose parts
  have minimal distance $\geq s$ and vertex partitions whose parts
  have minimal distance $\geq s+1$. 
\end{abstract}

\section{Introduction}
\label{intro}

If $G = (V,E)$ is a graph with vertices $V$ and edges $E$, a set of
vertices $A \sus V$ is independent if no two vertices in $A$ are on
the same edge.
Between any two vertices there is a well-defined distance, and the vertices are
independent if their distance is $\geq 2$.
Distance may also be defined between edges in a graph.

Assume the graph $G = T$ is a tree and consider partitions of vertices in a tree
\[  V = V_0 \sqcup V_1 \sqcup \cdots \sqcup V_r \]
into $r+1$ non-empty parts. We can also partition edges
\[ E = E_1 \sqcup E_2 \sqcup \cdots \sqcup E_r \]
into $r$ non-empty parts.

\medskip
\noindent {\bf 0.}
We give a bijection
between:
\begin{itemize}
\item[i)] Partitions of vertices into $r+1$ non-empty parts,
each part consisting of independent vertices, and
\item[ii)] partitions of edges into $r$ non-empty parts (and no further
requirements).
\end{itemize}

\begin{example}
Any tree has a unique partition of the vertices into two independent sets
(two colors modulo $S_2$).
This corresponds to the partition of the edges into one part (one color).
\end{example}

Moreover the above generalizes in two directions.

\medskip
\noindent {\bf 1.}
Define a set of vertices $A \sus V$ to be $s$-scattered if the distance
between any two vertices is $\geq s$. Similarly we have the notion
of a set of edges $B \sus E$ being $s$-scattered.
We show, Theorem \ref{thm:partition-VE}, for a tree and
for $r,s \geq 1$ there is a bijection between:

\begin{itemize}
\item[i)] Partitions of vertices into $r+1$ non-empty parts, each
  part being $s+1$-scattered, and
\item[ii)] Partitions of edges into $r$ non-empty parts, each
  part being $s$-scattered
\end{itemize}

\medskip
\noindent {\bf 2.}
Stacked simplicial complexes is a generalization of trees to higher dimensions.
Stacked polytopes \cite{Grum} with the triangulation coming from a stacking of
simplices, induce a well-known subclass of stacked simplicial complexes.
The main feature of stacked simplicial complexes
for us is that between any two facets (maximal faces)
there is a unique path. Similarly between
any pair of vertices there is a unique path of facets.
We show:

\medskip
\noindent{\bf Theorem \ref{thm:partition-VE}}
{\it Let $X$ be a stacked simplicial complex of dimension $d$ and 
$r,s \geq 1$. There is a bijection between:

\begin{itemize}
\item[i)] Partitions of vertices of $X$ into $r+d$ non-empty parts, each
  part being $s+1$-scattered,
\item[ii)] Partitions of facets of $X$ into $r$ non-empty parts, each
  part being $s$-scattered
\end{itemize}
}

\medskip
The results on trees appear quite non-trivial even for the
simplest of graphs, the line graph. By considering larger and
large line graphs, we get in the limit results for partitions of
natural numbers.

\medskip
\noindent {\bf Theorem \ref{thm:nat-rd}}
{\it There is a bijection between partitions of the natural numbers $\NN$ into $r$ non-empty parts,
  each part being $s$-scattered, and partitions of $\NN$ into $r+1$ non-empty
  parts, each part being $s+1$-scattered.
}

\medskip
As a consequence we get for instance:

\medskip
\noindent {\bf Corollary \ref{cor:nat-d}}
{\it There is a bijection between partitions of $\NN$ into two parts
$A_0 \sqcup A_1$ (each part automatically $1$-scattered)
and partitions of $\NN$ into
$d+1$ parts $B_0 \sqcup \cdots \sqcup B_{d}$, each part being $d$-scattered.
}

\medskip
\noindent (A trivial special case by repeated use of Theorem \ref{thm:nat-rd},
starting from $r = s = 1$ and successively increasing $r,s$:
There is a unique partition of $\NN$ into $r$ parts, each being $r$-scattered.
These parts are of course the congruence classes modulo $r$.)

\medskip

Let us explain in more detail the bijection, first  for trees, and
secondly in an example of triangulations of polygons. Such triangulations are
stacked simplicial complexes of dimension two.

\medskip
Given a tree and a partition of the vertices $V$,
make a partition of the edges $E$ as follows:
If $v$ and $w$ are vertices consider
the unique path in $T$ linking $v$ and $w$.
Let $f$, respectively
$g$, be the edge incident to $v$, respectively $w$, on this path.
If (i)~$v$ and $w$ are in the {\it same part}~$V_i$ of~$V$ and
(ii)~{\it no other} vertex
on this path is in the part $V_i$, then put $f$
and $g$ into the same part of~$E$, and write $f \sim_E g$. The partition
of edges is the equivalence relation {\it generated} by $\sim_E$.

\medskip Conversely given a partition of the edges $E$, make a partition
of the vertices $V$ as follows:
Let $v$ and $w$ be distinct vertices, and consider
again the path from $v$ to $w$. Let the edges $f,g$ be as above.
If (i)~the edges $f$ and $g$ are distinct (equivalently the vertices
$v,w$ are independent),
(ii)~$f$ and $g$ are in the {\it same
part} $E_j$, and (iii)~{\it no other} edge on this path is in the part $E_j$,
then put $v$ and $w$ in the same part of $V$, and write
$v \sim_V w$. The partition of vertices is the equivalence relation
generated by $\sim_V$.



\begin{example}\label{eks:intro-graph}
  In Figure~\ref{figfive} 
  we partition the edges into two parts, colored red and black. The vertices
  are then partitioned into three parts, each consisting of independent vertices. 
  The partition of the vertex set of the first tree is
  \[ \{1,3,5\}\cup\{2,6\}\cup\{4\},\]
  and that of the second tree is
  \[\{1,3,5,8,10\}\cup\{2,4,7\}\cup\{6,9\}.\]
\end{example}

\begin{figure}
\begin{center}
\begin{tikzpicture}[dot/.style={draw,fill,circle,inner sep=1pt},scale=1.2]
\draw [help lines,white] (1,-.5) grid (6,0);
\draw[very thick,red] (3,0)--(5,0);
\draw[very thick] (1,0)--(3,0);
\draw[very thick] (5,0)--(6,0);
\fill (1,0)  circle (0.09);
\fill (2,0)  circle (0.09);
\fill (3,0)  circle (0.09);
\fill (4,0)  circle (0.09);
\fill (5,0)  circle (0.09);
\fill (6,0)  circle (0.09);
\fill[yellow] (1,0)  circle (0.065);
\fill[green] (2,0)  circle (0.065);
\fill[yellow] (3,0)  circle (0.065);
\fill[blue] (4,0)  circle (0.065);
\fill[yellow] (5,0)  circle (0.065);
\fill[green] (6,0)  circle (0.065);
\node at (1,.25) {$1$};
\node at (2,.25) {$2$};
\node at (3,.25) {$3$};
\node at (4,.25) {$4$};
\node at (5,.25) {$5$};
\node at (6,.25) {$6$};
\end{tikzpicture}\\
\begin{tikzpicture}[dot/.style={draw,fill,circle,inner sep=1pt},scale=1.1]
\draw[very thick,red] (5,0)--(6,0);
\draw[very thick] (1,0)--(5,0);
\draw[very thick] (6,0)--(7,0);
\draw[very thick] (4,0)--(4,-1);
\draw[very thick,red] (4,-1)--(4,-3);
\fill (1,0)  circle (0.09);
\fill (2,0)  circle (0.09);
\fill (3,0)  circle (0.09);
\fill (4,0)  circle (0.09);
\fill (5,0)  circle (0.09);
\fill (6,0)  circle (0.09);
\fill (7,0)  circle (0.09);
\fill (4,-1)  circle (0.09);
\fill (4,-2)  circle (0.09);
\fill (4,-3)  circle (0.09);
\fill[yellow] (1,0)  circle (0.065);
\fill[green] (2,0)  circle (0.065);
\fill[yellow] (3,0)  circle (0.065);
\fill[green] (4,0)  circle (0.065);
\fill[yellow] (5,0)  circle (0.065);
\fill[blue] (6,0)  circle (0.065);
\fill[green] (7,0)  circle (0.065);
\fill[yellow] (4,-1)  circle (0.065);
\fill[blue] (4,-2)  circle (0.065);
\fill[yellow] (4,-3)  circle (0.065);
\node at (1,.25) {$1$};
\node at (2,.25) {$2$};
\node at (3,.25) {$3$};
\node at (4,.25) {$4$};
\node at (5,.25) {$5$};
\node at (6,.25) {$6$};
\node at (7,.25) {$7$};
\node at (4.22,-1) {$8$};
\node at (4.22,-2) {$9$};
\node at (4.3,-3) {$10$};
\end{tikzpicture}
\caption{Partitions of edges into two parts and corresponding
partitions of vertices into three parts}
\label{figfive}
\end{center}
\end{figure}
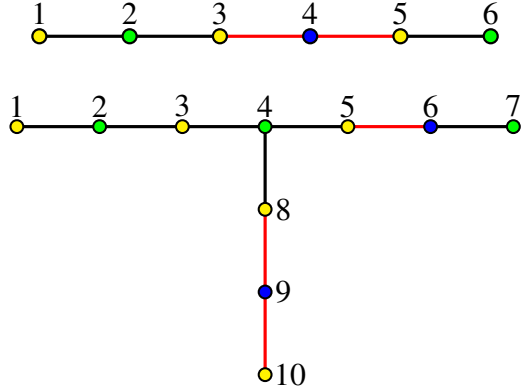

\medskip

\begin{example}
  Consider now a triangulation of the heptagon, which is
  a stacked simplicial complex. We partition the
  facets into two parts: three blue and two red, Figure \ref{figpol}.
  This corresponds, in a similar way as for trees, to a partition of the
  vertices, now into four parts of independent vertices: two yellow
  vertices $3,7$, three red $1,4,6$, one blue $2$, and one green $5$.
  In fact the facet parts here are $2$-scattered and so each of the
  vertex parts will even be $3$-scattered.

  Note that the colors have no real significance here, they are just
  added for pedagogical and visualisation purposes.
  In particular there is no connection
  between colors of facets and colors of vertices.
  \end{example}

\begin{figure}
\begin{center}
  \begin{tikzpicture}[dot/.style={draw,fill,circle,inner sep=2.3pt},scale=2]
\draw [help lines, white] (-1.5,-1.5) grid (1.5,1.2);


\draw [fill=blue, opacity=0.3] (0,1)--(.79,.63)--(-.79,.63)--(0,1);
\draw [fill=red, opacity=0.3] (.79,.63)--(-.43,-.9)--(-.79,.63)--(.79,.63);
\draw [fill=blue, opacity=0.3] (.79,.63)--(.43,-.9)--(-.43,-.9)--(.79,.63);
\draw [fill=blue, opacity=0.3] (-.43,-.9)--(-.79,.63)--(-.98,-.23)--(-.43,-.9);
\draw [fill=red, opacity=0.3] (.79,.63)--(.98,-.23)--(.43,-.9)--(.79,.63);

  \foreach \l [count=\n] in {0,1,2,3,4,5,6} {
    \pgfmathsetmacro\angle{90-360/7*(\n-1)}
      \node[dot] (n\n) at (\angle:1) {};
      \fill (\angle:1)  circle (0.09);
      }

\fill[red] (90:1) circle (0.065);
\fill[yellow] (141.43:1) circle (0.065);
\fill[red] (192.86:1) circle (0.065);
\fill[green] (244.29:1) circle (0.065);
\fill[red] (295.72:1) circle (0.065);
\fill[yellow] (347.15:1) circle (0.065);
\fill[blue] (398.58:1) circle (0.065);
      
  \foreach \l [count=\n] in {0,1,2,3,4,5,6} {
    \pgfmathsetmacro\angle{90-360/7*(\n-1)}
      \node (m\n) at (\angle:1.3) {$\n$};
  }
  \draw[thick] (n1) -- (n2) -- (n3) -- (n4) -- (n5) -- (n6) -- (n7) -- (n1);
  \draw[thick] (n2)--(n7)--(n5)--(n2)--(n4);
\end{tikzpicture}
\caption{Partition of facets into two parts and corresponding partition
of vertices into four parts}
\label{figpol}
\end{center}
\end{figure}
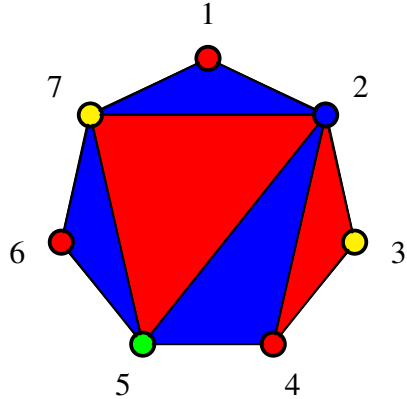

Our results here came out of work in \cite{FlOr} by M.Orlich and the author
on triangulations of polygons and more generally on stacked
simplicial complexes.

Results related to the present article are to be found  in
enumerative combinatorics. A first result in this direction is
W.Yang \cite{Ya} considering trees on $n+1$ vertices,
showing that partitions into independent sets of vertices are counted by Bell
numbers $B_{n}$. He also considers generalized $d$-trees and show
that partitions into independent sets of vertices of such a tree with
$n+d$ vertices is counted by the
Bell number $B_n$. A generalized $d$-tree is the same as the edges
(or $1$-skeleton) of a stacked simplicial complex of dimension $d$. 

B.Duncan and R.Peele \cite{DuPe} consider enumerative aspects of partitions
of vertices of graphs into independent sets. For trees they
show that partitions of a tree with $n+1$ vertices into $k+1$ independent
parts are in bijection with partitions of an $n$-set into $k$ parts.
They give, mostly illustrated by a large example,
a bijection which is essentially the same as we give.
But we gain here the conceptual advantage of expressing this in terms
of edges of the tree. Their statement involves choosing an arbitrary
vertex $r$, a root, and then relating independent vertex partitions
of $V$ to partitions of $V \backslash \{r\}$.
Their less conceptual form may also be the reason they do not
have the extension to $s$-scattered parts.
\cite{HeMe} and \cite{KeNy} considers enumerative aspects of
graphical Bell numbers further, and 
the latter also generalized $d$-trees. 

W.Chen, E.Deng, and R.Du  \cite{CDD} consider ordered sets and use the
terminology $m$-regular for $m$-scattered. They show that partitions
of an ordered $n+1$-set into $k+1$ parts which are $m+1$-regular
are in bijection with partitions of an $n$-set into $k$ parts
which are $m$-regular. This corresponds to the case of line
graphs, and for this case the bijection they give is essentially the
same as ours. We discuss this in Section \ref{sec:nat}. They also
show that we have bijections in this case when considering non-crossing
partitions. Related enumerative results are found in \cite{Mu}
and \cite{CW}. The book \cite{TM} is a comprehensive account of
partitions of ordered sets. 

\begin{remark}
  The results of this paper dropped out of investigations
  in \cite{FlOr}, concerning Stanley-Reisner rings of stacked simplicial
  complexes. Among such simplicial complexes there are certain
  {\it separated models}, and from these models we obtain every
  stacked simplicial complex by partitioning the vertices into
  independent classes and then collapsing each class into a single vertex.
  This is done such that the essential algebraic and homological properties of
  the associated rings are preserved.
  
  The algebra involved here should generalize to much larger classes
  of simplicial complexes. Polarizing Artin monomial ideals gives
  separated models, and in \cite{AFL} we describe all such for
  polarizations of any power of a  graded maximal ideal in a polynomial
  ring. The case
  of stacked simplicial complex is the case of the {\it second} power.
  Polarizing Artin monomial ideals in general still goes much further
  than \cite{AFL}. The results
  in the present article therefore likely have vast generalizations,
  see Subsection 8.2 of \cite{FlMa}.
  This should involve partitions of vertices, but it is a challenge
  what other ingredients and statements should be involved.

  Let us also mention that the main result of \cite{FlMa} is a
  fundamental theorem of combinatorial geometry of monomial ideals.
  Namely that any polarization of an Artin monomial ideal, via the
  Stanley-Reisner correspondence, is a simplicial complex whose
  topological realization is a ball. 
\end{remark}

We describe the  organization of this article. Section \ref{sec:stacked} gives
the notion of stacked simplicial complex of dimension $d$.
It characterizes these, Proposition \ref{pro:stacked-ekviv},
as the simplicial complexes for which there is a unique path between any
pair of facets, and the number of vertices is $d$ more
then that number of facets. Section \ref{sec:main} describes the
correspondence between partitions of vertices and partitions of facets,
and shows our main Theorem \ref{thm:partition-VE}.
In the last Section \ref{sec:nat} we specialize these results to bijections
between partitions  of natural numbers, the parts having lower bound
requirements on minimal distance.

\noindent {\it Acknowledgements.}
 I thank M.Orlich for making
  Figure \ref{figfive}.

\section{Stacked simplicial complexes}
\label{sec:stacked}

We recall the notion of stacked simplicial complex.
Its main feature from our perspective, is that it
generalizes the property of trees, that for every two
faces, there is a unique path between them. 

\subsection{Paths}
Let $V$ be a finite set. A {\it simplicial complex} $X$ on $V$
is a family of subsets of $V$ 
closed under taking subsets of each element of the family. 
So for an element $F \in X$ and $G \sus F$, then also  $G \in X$.
The elements of $V$ are {\it vertices}, the elemens of $X$ are {\it faces},
and the maximal elements of $X$ for the inclusion relation are
{\it facets}.
A simplicial complex has a natural geometric realization. A
face with $d$ vertices is then realized as a simplex of dimension $d-1$. 

Given {\it any} family $Y$ of subsets of $V$, the simplicial complex 
{\it generated
by $Y$} is the family of all subsets $G$ of $V$ such that $G \sus F$ for
some $F \in Y$. 


\begin{definition}
  A pure simplicial complex (i.e., where all the facets have the same dimension)
  is {\it stacked} if there is an ordering of its
  facets $F_0, F_1, \ldots, F_k$ such that if $X_{p-1}$ is the simplicial
  complex generated by $F_0, \ldots, F_{p-1}$, then for $p \geq 1$ the facet
  $F_p$ is attached to
  $X_{p-1}$ along a single codimension one face of $F_p$. So we may write
  $F_p = G_p \cup \{v_p \}$ where $G_p$ is a codimension one face of $X_{p-1}$
  and $v_p$ is not a vertex of $X_{p-1}$. The vertex $v_p$ is the
  {\it free vertex} of $F_p$, in this stacking order.
\end{definition}

\begin{remark}
This is a special case of shellable simplicial complexes,
see~\cite[Subsection~8.2]{HeHi}. It is not the same as the notion of
simplicial complex being a tree as in~\cite{Far}, even if the tree
is pure. Rather the notion of stacked simplicial complex
is more general. For instance the triangulation of the heptagon
given in Figure \ref{figpol}, is not a tree in the
sense of \cite{Far}, since removing the triangles~$234$ and~$257$
one has no facet which is a leaf.
\end{remark}

\medskip
\begin{definition}
  A {\it (gallery) walk} in a pure simplicial complex, is a sequence
  of facets $f_1, \ldots, f_p$ such that each
  $g_i = f_i \cap f_{i+1}$ has codimension one in $f_i$
  (and hence also in $f_{i+1}$).
The {\it left end vertex} is the single element of $f_1 \backslash f_2$
and the {\it right end vertex} is the single element of $f_p \backslash f_{p-1}$.
\end{definition}

If $f_i \cap f_{i+1} = f_j \cap f_{j+1}$ for some $1 \leq i < j < p$,
we can make a shorter walk from $f_1$ to $f_p$.
If $f_i \neq f_{j+1}$ then
\[ f_1, \ldots, f_i, f_{j+1}, \ldots, f_p\]
is a shorter walk. If $f_i = f_{j+1}$, then
\[ f_1, \ldots, f_{i-1}, f_{j+1}, \ldots, f_p \]
is a shorter walk from $f_1$ to $f_p$. 

\begin{definition} \label{def:stacked-path}
  A {\it path} is a walk $f_1,f_2, \ldots, f_p$
  where all the $f_i \cap f_{i+1}$ are distinct. The {\it length} of the path is
  $p-1$.  
\end{definition}

By the explanation before this definition,
any walk from $f_1$ to $f_p$ may be reduced to a path between $f_1$ and $f_p$.

\begin{lemma} \label{lem:stacked-order}
  Let $X$ be a stacked simplicial complex and $f_1,f_2, \ldots, f_p$ a path
  in $X$.
  \begin{itemize}
  \item[a.] Let $f_i$ come last among the facets on the path,
    for a stacking order of $X$, and let $v$ be its free vertex.
    Then either $f_i = f_1$ and
  $\{v\} = f_1\backslash f_2$ or $f_i = f_p$ and
  $\{v \} = f_p \backslash f_{p-1}$.
  \item[b.] All the $f_i$ are distinct.
  \end{itemize}
\end{lemma}

\begin{proof}
\noindent a.  If $1 < i < p$, then $f_i = (f_i \cap f_{i-1}) \cup (f_i \cap f_{i+1})$,
  and so $v$ would be on two facets. But this is not so.
  So $f_i$ must be one of the end vertices and $\{v\}$ must be as above.

\noindent  b. If $f_1 = f_p$ then $p \geq 3$ and as $v$ is on only one facet on
  the path, $f_1 \cap f_2 = f_1 \backslash \{ v \} = f_p \backslash \{v \}
  = f_{p-1} \cap f_p$, contradicting that we have a path.

  If $1 \leq j < k \leq p$, then $f_j, f_{j+1}, \ldots, f_k$ is also a path
  and so $f_j$ and $f_k$ must be distinct.  \qed
\end{proof}


\begin{definition} Let $X$ be a pure simplicial complex, and
  $h,k$ faces in $X$ such that $h \cup k$ is not contained
   in any codimension one face.
   A {\it path between $h$ and $k$}, written
   \[ h | f_1, \ldots, f_p | k\]
   is a path $f_1, \ldots, f_p$ such that
   \[ h \sus f_1,\, h \not \sus f_1 \cap f_2, \quad  k \sus f_p, \,
     k \not \sus f_p \cap f_{p-1}. \]
   The {\it face-distance} (or simply {\it distance}) between
   $h$ and $k$ is $p$.
   In particular, if $h,k$ are contained in a common facet, and
   not in a codimension one face, their distance is one. 
   If $h \cup k$ is contained in a codimension one face, a path
   between is an empty path consisting of no facets, written $h || k$.
   Their distance is defined to be zero. 
\end{definition}

We mostly use this definition when $h$ and $k$ are single vertex sets $\{v\}$
and $\{w\}$, in which case we simply
write $v$ instead of $\{v\}$,
and similarly for $w$.

\begin{lemma} 
  \label{lem:stacked-hk} Let $X$ be a stacked simplicial complex, and
  $h,k$ faces of $X$.
  \begin{itemize}
  \item[a.]
    In a path $f_1, \ldots, f_p$
  between $h$ and $k$ we have $h \not \sus f_i \cap f_{i+1} $ and
  $k \not \sus f_i \cap f_{i+1}$ for each $1 \leq i < p$.
\item[b.] Let $f$ be the facet on the path which is last for a stacking order
  on $X$. 
  The free vertex of $f$ is in $h$ or in $k$.
\end{itemize}

\end{lemma}

\begin{proof}
  \noindent a.  Suppose for $h$ there was an $r$ such that
  $h \sus f_r \cap f_{r+1}$, and let $r$ be minimal such.
Then $r \geq 2$ and consider the path
  \[ f_1, \ldots, f_r \]
  Let $f$ be the last among these facets in the stacking order of $X$, and
  $v$ its free vertex. Then $f = f_1$ or $f = f_r$.
  If $f = f_1$, since $h \sus f_1$ and $h \not \sus f_1 \cap f_2$, $h$ must
  contain the single vertex in $f_1 \backslash f_1 \cap f_2$, and this
  vertex is $v$. But then $v$ is also in $f_r$. Since $v$ is only on
  a single facet, then $f_r = f_1$, and this contradicts all facets on
  a path being distinct.
  
  \noindent b. The last facet is one of the end facets, say $f_1$. If
  $p = 1$ then $f_1 = h \cup k$ and the statement holds. If $p \geq 2$, the
  $f_1 = h \cup (f_1 \cap f_2)$ and since $v$ is not on two facets in the
  path, we get $v \in h$. \qed
\end{proof}

\subsection{Existence and uniqueness of paths}

  \begin{proposition} \label{prop:stacked-path}
    Let $h,k$ be faces with union $h \cup k$ not contained in any
    codimension one face. Then there is a unique path between them.
  \end{proposition}

  \begin{proof}
{\it Existence:}
    If $h \cup k$ is a facet $f$, then $h | f | k$ is a
    path between them. Assume then $h \cup k$ is not contained in a facet.

    Let $f$ be a facet containing $h$ and $f^\prime$
    a facet containing $k$. Let
    \[ f = f_1, \ldots, f_p = f^\prime \]
    be the path between them. Let $i$ be maximal such that $h \sus f_i$.
    Let $j \geq i$ be minimal such that $k \sus f_j$.
    Then we have a path from $h$ to $k$:
    \[ h | f_i, \ldots, f_j | k. \]

    \medskip
\noindent    {\it Uniqueness:}
     Let
    \[ h | f_1, \ldots, f_p | k, \quad h | f_1^\prime, \ldots , f_q^\prime | k \]
    be paths with $p,q \geq 1$. 

    Let $f$ be the last of the facets in these paths,
    in the stacking order,
    and with free vertex $v$.
    By Lemma \ref{lem:stacked-hk}, $v$ is in say $h$. As $v$ is in a single
    facet on these paths, we get $f = f_1^\prime = f_1$.
    If $p = q = 1$ we are done.

\noindent i) Suppose exactly one of $p,q$ is  $\geq 2$, say $p = 1$ (and $q \geq 2$).
    Then $f_1^\prime = f_1 = h \cup k$ so $k \sus f_1^\prime$.
    Since $k \not \sus f_1^\prime
    \cap f_2^\prime$ the free vertex $v$ is also in $k$, and so in $f_q^\prime$.
    Then $f_q^\prime = f = f_1^\prime$, which is not so for a path.   
    

\noindent ii) Suppose the paths have $p,q \geq 2$. 
    Let
    \[ g := f \backslash \{v\} = f_1 \cap f_2 = f_1^\prime \cap f_2^\prime. \]
    Since $k \not \sus f_1 \cap f_2 = g$,
    we have $g \cup k$ not included in a codimension
    one face. So we get paths
    \[ g | f_2, \ldots, f_p | k, \quad g | f_2^\prime, \ldots, f_q^\prime | k. \]
    By induction on length, these paths are equal.   \qed  
  \end{proof}


The following generalizes the well-known
situation for trees.

\begin{proposition} \label{pro:stacked-ekviv}
  Let $X$ be a pure simplicial complex of dimension $d$
with $n$ facets and $v$ vertices.  Then $X$ is stacked iff $v = n+d$ and 
between every pair of facets there is a unique path.
\end{proposition}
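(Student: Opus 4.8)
The plan is to prove the two directions separately, establishing the numerical identity $v = n+d$ and the unique-path property for a stacked complex, and conversely showing that these two conditions force the existence of a stacking order.

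For the forward direction, I would induct on the number of facets $n$ using the stacking order $F_0, F_1, \ldots, F_{k}$ (with $n = k+1$). The base case is a single facet: one $d$-simplex has $d+1$ vertices, so $v = 1 + d = n + d$, and the unique-path condition is vacuous. For the inductive step, when we stack $F_k$ on $X_{k-1}$, the facet $F_k$ contributes exactly one new (free) vertex $w$, so the vertex count increases by one as the facet count does, preserving $v = n+d$. For uniqueness of paths, I would argue that any path involving $F_k$ must have $F_k$ as an endpoint: since $w$ is free and lies only on $F_k$, by the reasoning in Lemma~\ref{lem:stacked-order} the last-stacked facet on a path is always an endpoint. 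Thus a path between two facets both in $X_{k-1}$ cannot pass through $F_k$, so uniqueness there follows from the inductive hypothesis; and a path ending at $F_k$ is determined by the unique path to the single facet $F_j$ ($j<k$) meeting $F_k$ in codimension one, extended by the step $F_j, F_k$. I would need to check that $F_k$ attaches to $X_{k-1}$ along a \emph{unique} facet in codimension one, which is exactly what ``stacked on'' guarantees since $w$ is the only free vertex of $F_k$.

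For the converse, assume $v = n+d$ and that between every pair of facets there is a unique path. The natural strategy is to produce a stacking order by repeatedly stripping off leaves, i.e., to show that a pure complex satisfying these hypotheses always has a facet that is a leaf (a facet with a free vertex), remove it, and verify that the smaller complex still satisfies the hypotheses so the induction runs. The existence of a leaf is the crux: I would consider a facet $f$ that is an endpoint of a \emph{longest} path in $X$, or count incidences between vertices and facets. A clean counting approach: each facet has $d+1$ vertices, giving $n(d+1)$ incidences; if every facet shared each of its vertices with at least one other facet, the vertex count would be bounded below in a way incompatible with $v = n+d$. Making this precise, the equality $v = n+d$ says the complex is as ``spread out'' as a tree-like object can be, forcing some facet to carry a private vertex.

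The main obstacle I anticipate is the converse direction, specifically guaranteeing the existence of a leaf and checking that deleting it preserves \emph{both} hypotheses simultaneously. Deleting a leaf $f$ with free vertex $w$ reduces $n$ by one and $v$ by one, so $v = n+d$ is preserved automatically; the delicate point is unique paths in $X_{-\{w\}}$. One must rule out that removing $f$ disconnects the complex or creates a second path between two surviving facets. Connectivity should follow because any path in $X$ through $f$ had $f$ as an endpoint (again by the Lemma~\ref{lem:stacked-order} mechanism, since $w$ is free), so paths between surviving facets never used $f$ and are unaffected. Confirming that no \emph{new} path appears is immediate since $X_{-\{w\}}$ has fewer faces than $X$. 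The genuinely hard step is thus isolating a vertex-counting or extremal-path argument that reliably produces the free vertex; I expect the extremal-path argument (taking a longest path and analyzing its endpoint together with the codimension-one intersection structure) to be the most robust route, possibly combined with the incidence count to pin down that the endpoint facet has a private vertex.
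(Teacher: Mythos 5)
Your overall plan (induct on a stacking order for the forward direction; strip off leaves for the converse) is reasonable in outline, but both halves have genuine gaps. In the forward direction your key step rests on a false claim: being ``stacked on'' does \emph{not} guarantee that $F_k$ meets $X_{k-1}$ along a \emph{unique} facet in codimension one. The definition only says that every vertex of $F_k$ other than the free vertex $w$ lies on \emph{some} earlier facet; it places no bound on how many facets of $X_{k-1}$ contain $F_k \setminus \{w\}$. Already for trees ($d=1$) a new edge $\{u,w\}$ attaches at a vertex $u$ that may lie on many old edges, each meeting the new edge in codimension one; for $d=2$, stack $124$ and then $125$ onto $123$: both $123$ and $124$ meet $125$ along the edge $12$. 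So your recipe for the path ending at $F_k$ (``the unique path to the single attaching facet, extended by one step'') does not define anything, and uniqueness of paths ending at $F_k$ needs an actual argument. One can be supplied --- if two paths from $g$ to $F_k$ had distinct penultimate facets $h,h'$, then $h \cap h' \supseteq F_k \setminus \{w\}$, so one path can be rerouted through $h'$ and compared, via uniqueness in $X_{k-1}$, with the other, forcing two consecutive equal intersections --- but that is precisely the content you skipped. (For comparison, the paper does not induct on the stacking order at all here: it compares two given paths directly, repeatedly using Lemma~\ref{lem:stacked-order}, that the path facet which is last in the stacking order must be an end facet.)

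The more serious gap is in the converse, and you concede it yourself: you never actually produce the free vertex. Of your two proposed routes, the incidence count provably cannot work on its own, because it makes no use of the unique-path hypothesis, and $v = n+d$ alone does not force a leaf once $d \geq 2$: the complex with facets $123$, $145$, $246$, $356$ has $n=4$, $d=2$, $v = 6 = n+d$, and every vertex lies on exactly two facets. (It violates the path hypothesis, since no two of its facets share an edge, but a pure counting argument cannot see that.) The missing idea --- and this is how the paper proceeds --- is to fix a facet $f$, list all facets $f = f_1, \dots, f_n$ in order of increasing distance from $f$, and count vertices along this list: every facet on the path from $f$ to $f_{p+1}$ other than $f_{p+1}$ itself is strictly closer to $f$, hence already listed, so $f_{p+1}$ attaches to the complex generated by $f_1, \dots, f_p$ along a codimension-one face and contributes at most one new vertex; the equality $v = n+d$ then forces exactly one new vertex at every step, so in particular the last facet $f_n$ has a free vertex. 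This uses both hypotheses simultaneously and also hands you the induction: a path between two facets of $Y_{n-1}$ cannot pass through $f_n$ (an interior facet of a path is the union of its two codimension-one intersections, which would put the free vertex on another facet), so $Y_{n-1}$ inherits unique paths together with $v = (n-1)+d$. Finally, your deletion step needs one more check you did not flag: $w$ must be the \emph{only} free vertex of $f$, since otherwise $X_{-\{w\}}$ is not pure; this follows from path-existence when $n \geq 2$, because a facet with two free vertices meets every other facet in codimension at least two.
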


\begin{proof}
  When $X$ is stacked it is clear from construction that $v = n +d$.
  By Proposition \ref{prop:stacked-path} there is a unique path
  between any two facets. 

\medskip
Conversely, assume $v = n+d$ and that between every pair of facets
there is a unique path. Choose a facet $f$, order the
facets of $X$:
\begin{equation} \label{eq:stacked-dist}
  f = f_1, f_2, \ldots, f_{n}  \end{equation}
such that the distance $\dist(f,f_i) \leq \dist(f,f_j)$ for $i \leq j$. 
Let $Y_p$ be the simplicial complex generated by $f_1, \ldots, f_p$.
Consider the path from $f$ to $f_{p+1}$:
\[f = f^1, f^2, \ldots, f^r = f_{p+1}. \]
Here all facets except the last $f^r$ are in $Y_p$ as they must be listed
before $f_{p+1}$ in \eqref{eq:stacked-dist} due to distance.
Since $f^{r-1} \cap f^r$ has codimension one in $f_{p+1}$, when passing
from $Y_p$ to $Y_{p+1}$ we have added at most one new vertex.
Since $Y_n =X$ has $d+n$ vertices, we must have added exactly one
new vertex each time, and so $f_n$ has a single free vertex, and
$Y_{n-1}$ has $d+n-1$ vertices.

We show that between any two facets of $Y_{n-1}$ there is a unique
path. By induction $f_1, \ldots, f_{n-1}$ is a stacking order for $Y_{n-1}$,
and so \eqref{eq:stacked-dist} gives that $Y_n$ is also stacked.

Let $f_r,f_s$ be two facets in $Y_{n-1}$. Consider the path
$f_r = f_1^\prime, \ldots, f_p^\prime = f_s$ in $X = Y_n$.
If the last facet $f_n$ is on this path, say  $f_n= f_t^\prime$,
then $1 < t < p$ and
\[ f_n = f_t^\prime  = (f_t^\prime \cap f_{t-1}^\prime) \cup (f_t^\prime \cap _{t+1})
  \sus Y_{n-1}. \]
This is not so, and so the path from $f_r$ to $f_s$ is entirely in
$Y_{n-1}$. \qed
\end{proof}

\noindent The {\it facet-distance} between two facets $f, g$
is defined to be the
length of the path $f|f_1, \ldots, f_p|g$ between them (note that
$f = f_1$ and $g = f_p$), and this length is $p-1$.

\begin{remark}
  Note that for two facets $f,g$ their face-distance is one more than
  their facet-distance.
It may seem awkward to have two notions of distance. But
  by looking at the graph:
\[\begin{tikzpicture}[dot/.style={draw,fill,circle,inner sep=1pt},scale=1.1]
\fill (0,0)  circle (.08);
\fill (1,0)  circle (.08);
\fill (2,0)  circle (.08);
\draw[very thick] (0,0)--(1.1,0);
\draw[very thick] (1,0)--(2,0);
\coordinate [label=above: $v$] (v) at (0,0);
\coordinate [label=below: $f$] (f) at (.5,0);
\coordinate [label=below: $g$] (g) at (1.5,0);
\coordinate [label=above: $w$] (w) at (2,0);
\end{tikzpicture}\]
it is natural. 
  The facet-distance between $f$ and $g$ is one, and the face-distance
  between $v$ and $w$ is two. (And the face-distance between $f$ and $g$
  is also two.)
\end{remark}


\subsection{Distance neighborhoods}
Let $X$ be a pure simplicial complex. 
  Choose a codimension one face $g$ in $X$.
  For $m \geq 1$, let $X_m$ be the simplicial complex generated by
  those facets of $X$ whose face-distance to
  $g$ is $\leq m$. In particular $X_0 = \emptyset$ and
  the facets of $X_1$ are the facets of $X$ containing
  $g$. Let $V_0$ be the vertices of $g$ and $V_m$ the vertices of $X_m$
  for $m \geq 1$.

  \begin{lemma} \label{lem:stacked-vVp} Assume $X$ is a stacked simplicial
    complex. 
 For $m \geq 1$, if $v \in V_m \backslash V_{m-1}$, there is a unique
  facet $f_v$ in $X_m$ containing $v$, and $f_v \backslash \{v\}$
  is a subset of $V_{m-1}$.
\end{lemma}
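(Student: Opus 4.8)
The plan is to reduce everything to the unique path between the \emph{faces} $g$ and $v$ supplied by Lemma~\ref{lem:stacked-path}. Since $v \in V_m \backslash V_{m-1}$ and $V_0$ is the vertex set of $g$, we have $v \notin g$, so $g \cup \{v\}$ is not contained in a codimension one face and Lemma~\ref{lem:stacked-path} applies, giving a unique path
\[ g \mid f_1, \ldots, f_p \mid v . \]
The candidate facet is $f_v := f_p$. First I would record that the distances along this path are forced: for each $i$ the truncation $g \mid f_1, \ldots, f_i \mid$ is, by Lemma~\ref{lem:stacked-path}, the unique path between $g$ and the facet $f_i$, so that $\dist(f_i,g)=i$; in particular $f_p \in X_p$ and $v \sus f_p$.

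The \emph{crux} is to show that $v$ lies on none of the earlier facets, i.e. $v \notin f_i$ for $i<p$. Suppose otherwise. By Lemma~\ref{lem:stacked-hk} we have $v \not\sus f_i \cap f_{i+1}$, so from $v \in f_i$ we get $v \notin f_{i+1}$; together with $v \sus f_p$ and $v \not\sus f_{p-1}\cap f_p$ this exhibits a path between the faces $v$ and $v$,
\[ v \mid f_i, f_{i+1}, \ldots, f_p \mid v, \]
of length $\geq 2$. Now I would run the stacking-order argument of Lemmas~\ref{lem:stacked-order} and~\ref{lem:stacked-hk}: let $f^\ast$ be the last of $f_i,\ldots,f_p$ in a stacking order of $X$, with free vertex $u$. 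As in those proofs $f^\ast$ must be an endpoint of the subpath, and because the endpoint face here is the single vertex $v$ one finds $u=v$. But a free vertex is absent from every facet occurring earlier in the stacking order, whereas the opposite endpoint of the subpath occurs earlier and contains $v=u$ --- a contradiction. Hence $v \in f_p$ only.

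It remains to identify $f_p$ as the unique facet of $X_m$ through $v$ and to locate its other vertices. For uniqueness I would again compare paths: given any facet $f'$ with $v \in f'$, take the path $g \mid e_1, \ldots, e_l = f' \mid$ and let $j$ be minimal with $v \in e_j$; then $g \mid e_1, \ldots, e_j \mid v$ is a path between $g$ and $v$, so by Lemma~\ref{lem:stacked-path} it coincides with $g \mid f_1, \ldots, f_p \mid v$. Thus $f_p = e_p$ sits on the path to $f'$, whence $\dist(f',g)=l \geq p$, with equality exactly when $f'=f_p$. Consequently no facet of distance $<p$ meets $v$, so $v \in V_p \backslash V_{p-1}$ forces $p=m$, and $f_p$ is the only facet of $X_m=X_p$ containing $v$. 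Finally, since $v \sus f_p$ but $v \not\sus f_{p-1}\cap f_p$, and the latter has codimension one in $f_p$, we get $f_p \backslash \{v\} = f_{p-1}\cap f_p \sus f_{p-1}$; as $f_{p-1}\in X_{m-1}$ this yields $f_v \backslash \{v\} \sus V_{m-1}$ (for $m=1$ the path is $g \mid f_1 \mid v$ with $f_1=g\cup\{v\}$, so $f_1\backslash\{v\}=g$, giving $V_0$).

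The step I expect to be the genuine obstacle is showing $v \notin f_i$ for $i<p$: it is what prevents $v$ from reappearing after having left the path, and it is precisely where the stacking structure (through the last-in-order/free-vertex argument) is indispensable rather than formal. The surrounding bookkeeping --- that position along the path equals distance to $g$, and that the face-path is unique --- is then routine given the earlier results, provided one keeps the degenerate case $p=1$ (equivalently, $v$ a free vertex of a facet containing $g$) in mind throughout.
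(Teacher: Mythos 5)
Your proof is correct, and it runs on the same engine as the paper's: everything is reduced, via Lemma~\ref{lem:stacked-path}, to the unique path between the faces $g$ and $v$, plus distance bookkeeping along truncations of paths. The paper traverses in the opposite direction --- it starts from an arbitrary facet $f_1$ of $X_m$ containing $v$, takes the path $f_1 | f_1, \ldots, f_r | g$, and truncates at the \emph{last} index $s$ with $v \in f_s$; the inequalities $r \leq m$ and $r-s+1 \geq m$ (the latter because $v \notin V_{m-1}$) then force $s=1$ and $r=m$, which yields existence, uniqueness and $f_v \backslash \{v\} \sus f_2 \sus V_{m-1}$ in one stroke. This is a mirror image of your uniqueness paragraph.

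The one thing to correct is your own assessment of where the difficulty sits. The step you single out as the crux --- that $v \notin f_i$ for $i<p$, proved by the last-in-stacking-order/free-vertex argument --- is redundant: it is a special case of what your uniqueness paragraph establishes, with no circularity. Indeed, apply that paragraph to $f' = f_p$: the unique path from $g$ to $f_p$ is the truncation $g | f_1, \ldots, f_p |$, and if $j$ is minimal with $v \in f_j$, then $g | f_1, \ldots, f_j | v$ is a path between $g$ and $v$, hence coincides with $g | f_1, \ldots, f_p | v$ by Lemma~\ref{lem:stacked-path}, giving $j=p$. More generally that paragraph shows no facet of distance $<p$ contains $v$, which subsumes the crux since $\dist(f_i,g)=i$. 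So the stacking structure enters only through Lemmas~\ref{lem:stacked-path} and~\ref{lem:stacked-hk}; no separate appeal to Lemma~\ref{lem:stacked-order} is needed, and the paper's device of truncating at the \emph{last} occurrence of $v$ avoids the question altogether.
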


\begin{proof}
  Let $f_1$ be a facet in $X_m$ containing $v$, and
  \[ f_1 | f_1, \ldots, f_r | g \]
  the unique path from $f_1$ to $g$. We have $r \leq m$.
  Let $s$ be maximal with $v \in f_s$. Then in $X_m$:
  \[ v | f_s, \ldots, f_r | g \] is the unique path from $v$ to $g$.
  Since $v \in V_{m}\backslash V_{m-1}$ we must have $r-s+1 = m$,
  and so $r = m$ and $s = 1$.
  Then $f_1$ is the first facet in the unique path from $v$ to $g$ and
  $f_1 \backslash \{ v \} \sus f_2 \sus V_{m-1}$. \qed
\end{proof}

\begin{corollary} $X_m$ is a stacked simplicial complex on $V_m$.
\end{corollary}

\begin{proof} Let $f_1, \ldots, f_r$ be any ordering of the facets
  in $X_m$ such that the face-distance between $g$ and $f_i$ is
  weakly increasing with $i$. Choose $1 \leq k \leq r$
  and let $d$ be the distance from $g$ to $f_k$.
  The facets $f_1, \ldots, f_k$
  are then all in $X_d$. Let $f_1^\prime, \ldots, f_d^\prime = f_k$ be the
  unique path from $g$ to $f_k$. Then:
  \begin{itemize}
    \item If $d \geq 2$ then $f_{d-1}^\prime = f_i$ for some $i < k$,
    \item $f_k = f_v$ for some $v \in V_d \backslash V_{d-1}$,
    \item $f_i \cap f_k = f_k \backslash \{v\}$,
    \item By Lemma \ref{lem:stacked-vVp}, $v$ is in none of the $f_i$
      for $i < k$.
    \end{itemize}
    Thus $f_1, \ldots, f_r$ is a stacking order for $X_m$. \qed
  \end{proof}

\section{Bijections between partitions of facets and of
  vertices}  \label{sec:main}

We show how partitions of facets and partitions of vertices into
independent sets correspond, and we show that this correspondence
is really a bijection. Our arguments are by induction on the
distance neighbourhood $X_m$. We develop some lemmata before
the proof of the main theorem. 

\subsection{Bijections between partitions}

\begin{definition}
Let $X$ be a be a stacked simplicial complex with vertex set $V$, 
and $s$ an integer $\geq 1$.
A subset $A \sus V$ is {\it $s$-scattered}
if the face-distance between any two distinct vertices in $A$ is
$\geq s$. The vertex set is {\it independent} if it is $2$-scattered,
i.e. no two vertices in $A$ are on the same facet.

Similarly a subset $B$ of the facets is $s$-scattered if the
facet-distance between any two facets in $B$ is $\geq s$. 
\end{definition}

We consider partitions of the vertices
\begin{equation} \label{eq:part-V}
  V = V_1 \sqcup V_2 \sqcup \cdots \sqcup V_r
\end{equation}
into non-empty disjoint sets. Note that the  order here is not relevant, so
if we switch $V_i$ and $V_j$ we have the same partition.

\begin{remark} If the $V_i$ are independent,
  this is almost the same as a graph coloring of vertices, but not quite.
  A coloring of the vertices $V$ is a map $f : V \pil C$ where
  $C = \{ c_1, \ldots, c_r\}$ is a set of colors, such that
  each inverse image $f^{-1}(c_i)$ is a set of independent vertices. 
  The symmetric group $S_r$ acts on colorings by permuting the colors
  $c_1, \ldots, c_r$.
  So a partition as above \eqref{eq:part-V}
is an orbit for the action of $S_r$. The class of such orbits, or equivalently
of partitions \eqref{eq:part-V} are also called
{\it non-equivalent vertex colorings}, see \cite{HeMe}. 
\end{remark}

We also consider partitions of the facets
\[ F = F_1 \sqcup F_2 \sqcup \cdots \sqcup F_s. \]

\medskip
\subsubsection{From vertex partitions to facet partitions.}
\label{subsec:partition-vf}
Now we make a correspondence as follows. Given a partition of $V$
into non-empty independent sets, given
by an equivalence relation $\sim_V$. For ease of following the
arguments, we will think of each part as having a specific color.
Make a partition of $F$ as follows. Let $f$ and $f^\prime$ be distinct
facets. Consider the unique path in $X$ between them:
\[ f =  f_1, \ldots, f_p = f^\prime, \]
and let $v$ and $w$ be respectively the left and right end vertices. 
If $v$ and $w$ have the same color, say blue, and none of
the facets $f_2, \ldots, f_{p-1}$ has any blue vertex, then
write $f \sim^\prime_F f^\prime$. This means that $f$ and $f^\prime$
will be in the same part of facets, these facets get the same
color. The colors of vertices and edges are however unrelated,
so the facets $f$ and $f^\prime$ get some color unrelated to blue.
The relation $\sim_F$ on $F$ is the equivalence relation
generated by $\sim_F^\prime$.

\medskip
\subsubsection{From facet partitions to vertex partitions}
\label{subsec:partition-fv}
Conversely given a partition of the facets $F$, given
by an equivalence relation $\sim_F$. Make a partition
of the vertices $V$ as follows.
Let $v$ and $w$ be independent vertices, and consider
the path from $v$ to $w$:
\[ v | f_1, \ldots, f_p | w. \]
If $f_1$ and $f_p$ have the same color, say green, and none of the
facets $f_2, \ldots, f_{p-1}$ are green, then let $v \sim^\prime_V w$.
The relation $\sim_V$ is the
equivalence relation generated by $\sim^\prime_V$. See Example
\ref{eks:intro-graph}.

\medskip
We want to show that these correspondences are inverse to each other.
To do this we show:

\medskip
\noindent {\bf A.} From an equivalence relation $\sim_F$ on $F$, we have
constructed the equivalence relation $\sim_V$ on $V$.
We show that the equivalence relation $\sim_V$ in turn induces
the equivalence relation $\sim_F$ by showing:
\begin{itemize}
\item[1.] If $v \sim_V w$ are distinct and, say blue, and
\[ v | f_1, \ldots, f_p | w \] where none of $f_2, \ldots, f_{p-1}$ have
a blue vertex, then $p \geq 2$ and $f_1 \sim_F f_p$
(in the original equivalence
relation for $F$)
\item[2.] If $f \sim_F g$ in the original relation with $f,g$ distinct,
 there is a sequence
  $f= f_0, f_1, \ldots, f_p = g$ such that
  \begin{equation} \label{eq:f-sekvens}
    f_0 \sim_F^\prime f_1 \sim_F^\prime  \cdots
    \sim_F^\prime f_p
    \end{equation} 
where $\sim^\prime_F$ is the relation constructed from $\sim_V$.
\item[3.] We also show that if the original $\sim_F$ is
  $s$-scattered, then $\sim_V$ is $s+1$-scattered.
  \end{itemize}

\medskip
\noindent
{\bf B.} From an equivalence relation $\sim_V$ on $V$, we have
constructed the equivalence relation $\sim_F$ on $F$. We show that this
in turn induces the equivalence relation $\sim_V$, by showing:
\begin{itemize}
\item[1.] If we have a path with $p \geq 2$
\[ v | f_1, \ldots, f_p | w \] where $f_1$ and $f_p$ are, say green,
and none of $f_2, \ldots, f_{p-1}$ are green, 
then $v \sim_V w$ (in the original equivalence
relation for $V$).
\item[2.] If $v \sim_V w$ in the original relation, there is
  a sequence $v = v_0, v_1, \ldots , v_p = w$ such that
  \begin{equation} \label{eq:v-sekvens}
    v_0 \sim_V^\prime v_1 \sim_V^\prime  \cdots
    \sim_V^\prime v_p,
  \end{equation}
where $\sim_V^\prime$ is the relation constructed from $\sim_F$.
\item[3.] We also show that if the original $\sim_V$ is  $s+1$-scattered,
  then $\sim_F$ is $s$-scattered.
\end{itemize}

\subsection{Induction arguments on $X_m$}
We show {\bf A, B} for the $X_m$ by induction on $m$. For this we need some
lemmata. 

\begin{lemma} \label{lem:partition-usimw} Given a partition of the facets $F$,
  and consider the relation $\sim_V^\prime$ on vertices, constructed above
  in Subsection \ref{subsec:partition-fv}.
  Let $v \in V_m \backslash V_{m-1}$, and $u,w$ distinct in $V_m$.
  If $v \sim_V^\prime u$ and $v \sim_V^\prime w$, then 
  $u \sim_V^\prime w$.
\end{lemma}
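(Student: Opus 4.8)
We have a stacked simplicial complex $X$ with distance neighborhoods $X_m$ (facets within distance $m$ of a fixed codimension-one face $g$), and $V_m$ the vertices of $X_m$.

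Lemma \ref{lem:stacked-vVp} tells us: if $v \in V_m \setminus V_{m-1}$, there's a unique facet $f_v$ in $X_m$ containing $v$, and $f_v \setminus \{v\} \subseteq V_{m-1}$.

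We have a partition of facets $F$ (with colors), and the relation $\sim_V'$ on vertices. Recall: $v \sim_V' w$ (for independent $v, w$) when the path $v | f_1, \ldots, f_p | w$ has $f_1, f_p$ the same color and no $f_2, \ldots, f_{p-1}$ of that color.

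**The statement to prove:**

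Given $v \in V_m \setminus V_{m-1}$ and distinct $u, w \in V_m$, if $v \sim_V' u$ and $v \sim_V' w$, then $u \sim_V' w$.

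**My proof plan:**

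Let me sketch an approach.

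The plan is to show directly that the unique $u$--$w$ path itself witnesses $u \sim_V^\prime w$. Write the two given paths as
$v \mid f_v = f_1, f_2, \ldots, f_a \mid u$ and $v \mid f_v = f_1^\prime, f_2^\prime, \ldots, f_b^\prime \mid w$. The first thing I would establish is that they share their starting facet. Since $v \in V_m \setminus V_{m-1}$, Lemma \ref{lem:stacked-vVp} says $f_v$ is the \emph{only} facet of $X_m$ containing $v$; as $u,w,v$ all lie in $V_m$ the two paths run inside $X_m$, so both begin with $f_v$, i.e. $f_1 = f_1^\prime = f_v$. This is the decisive consequence of $v$ being new at level $m$: the paths automatically agree on their first facet. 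Give $f_v$ its colour, say blue. Because $v \sim_V^\prime u$ forces the end facets $f_1 = f_v$ and $f_a$ of the $v$--$u$ path to share a colour with no intermediate facet of that colour, that colour must be blue; hence $f_a$ is blue and none of $f_2, \ldots, f_{a-1}$ is blue. The same reasoning applied to $v \sim_V^\prime w$ gives that $f_b^\prime$ is blue and none of $f_2^\prime, \ldots, f_{b-1}^\prime$ is blue.

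Next I would identify the unique $u$--$w$ path. Let $t$ be the largest index with $f_t = f_t^\prime$ (so $t \ge 1$, since $f_1 = f_1^\prime$), after which the paths diverge. A preliminary point is that $u$ and $w$ are independent: if they lay on a common facet, the $u$--$w$ path would have length one, which by uniqueness of paths (Lemma \ref{lem:stacked-path}) would force the facet sequences $(f_i)$ and $(f_j^\prime)$ to coincide, whence $u = w$. One then checks $t \le a-1$ and $t \le b-1$: if $t=a<b$ then $f_a = f_a^\prime$ is an interior facet of the $v$--$w$ path, hence not blue, contradicting that $f_a$ is blue, and the case $t=a=b$ puts $u,w$ on a common facet. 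The $u$--$w$ path is obtained by reversing the $v$--$u$ path back to the divergence point and continuing along the $v$--$w$ path; its two end facets are $f_a$ and $f_b^\prime$, both blue. Among \emph{all} the facets $f_i, f_j^\prime$, the only blue one other than these two end facets is $f_v = f_1 = f_1^\prime$. Hence it suffices to prove that $f_v$ is \emph{not} an interior facet of the $u$--$w$ path.

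This last step is the heart of the matter and the one I expect to be the main obstacle. If $t \ge 2$, the $u$--$w$ path turns around at $f_t$ (index $\ge 2$) and never returns to $f_1 = f_v$, so $f_v$ does not occur on it. If $t = 1$, the turnaround is at $f_v$ itself, and a naive merge would place the blue facet $f_v$ in the interior of the $u$--$w$ path — precisely the configuration that would break the conclusion. This is resolved using that $v$ is the free vertex of $f_v$: the path conditions give $v \notin f_v \cap f_2$ and $v \notin f_v \cap f_2^\prime$, so both intersections equal the single codimension-one face $f_v \setminus \{v\}$. Consequently $f_2 \cap f_2^\prime = f_v \setminus \{v\}$ is itself codimension one, the facets $f_2$ and $f_2^\prime$ are directly adjacent, and the reduced $u$--$w$ path runs $\ldots, f_2, f_2^\prime, \ldots$, skipping $f_v$ entirely. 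In either case $f_v$ is absent from the interior, so the $u$--$w$ path has blue end facets and no blue interior facet; by the construction of $\sim_V^\prime$ this gives $u \sim_V^\prime w$. The only genuine subtlety is the $t=1$ case just described, where one must see that the shared free vertex $v$ forces the two paths to leave $f_v$ through the same face, so that $f_v$ drops out of the merged path.
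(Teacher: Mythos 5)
Your main construction is the same as the paper's: splice the two paths at their point of divergence, reduce the resulting walk to a path, and observe that its end facets $f_a$ and $f_b'$ are blue while no facet in between can be blue. Your treatment of the delicate case $t=1$ (both paths must leave $f_v$ through the codimension-one face $f_v\setminus\{v\}$, so $f_v$ drops out under reduction) is exactly the right point, and is in fact spelled out more explicitly than in the paper, which disposes of it with ``by reducing this \ldots we get a path.''

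The genuine gap is your ``preliminary point'' that $u$ and $w$ are independent. You assert that if $u,w$ lay on a common facet, then uniqueness of paths (Lemma \ref{lem:stacked-path}) ``would force the facet sequences $(f_i)$ and $(f_j')$ to coincide, whence $u=w$.'' That inference is a non sequitur: uniqueness of the $u$--$w$ path says nothing about the two paths emanating from $v$, which have different endpoints. Already in a path graph with consecutive vertices $v,x,u,w$ and edges $e_1,e_2,e_3$, the vertices $u,w$ share the edge $e_3$ while the $v$--$u$ path $(e_1,e_2)$ and the $v$--$w$ path $(e_1,e_2,e_3)$ are distinct; no contradiction with uniqueness arises. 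The actual contradiction must come from the colours ($e_2$ would have to be simultaneously blue and not blue), and this is precisely what the paper's part~i) proves: if $u,w$ lie on a common facet, then the $v$--$u$ path and the $v$--$w$ path are \emph{nested} (both initial segments of the path from $f_v$ to a suitably chosen common facet), after which the hypotheses $v\sim_V' u$ and $v\sim_V' w$ force an interior facet of the longer path to carry the forbidden colour. Your step checking $t\le a-1$ and $t\le b-1$ does contain this colour argument for the nested cases $t=a<b$ and $t=b<a$, and the case $t=a=b$ gives $u=w$ directly (both vertices are then the unique right end vertex of the same path), so you need not invoke independence there; but you never rule out the remaining possibility that $u,w$ share a facet while the two paths properly diverge. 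Excluding that configuration is exactly the content of the paper's part~i), and it cannot be skipped: the relation $\sim_V'$ is only defined between independent vertices, and for $d\ge 2$ the ``$u$--$w$ path'' your argument appeals to does not even exist when $\{u,w\}$ is contained in a codimension-one face, so Lemma \ref{lem:stacked-path} is not applicable in the very situation you are trying to exclude.
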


\begin{proof}
  We show first that $u,w$ are independent. Recall, Lemma \ref{lem:stacked-vVp},
  that $f_v$ is the unique facet on $X_m$ containing $v$.

  \noindent i) Suppose $u,w$ were on the same
  face $f$. Let
  \[ f_v = f_1, \ldots, f_p = f \]
  be the path from $f_v$ to $f$. 
  If $u,w$ are both on $f_{p-1}$ we may
  make a shorter path. So we may assume $u,w \in f_p$ and not both
  in $f_p \cap f_{p-1}$. Assume then that $u$ is the right end vertex of
  $f_p$. Then the above must be the unique path from $v$ to $u$.
  Since $v \sim_V^\prime u$, $f_1$ and $f_p$ have the same color, say green.
  We have $w \in f_{p-1}$ and $w \not \in f_1$ (since $v,w$ are independent).
  Let $r \geq 2$ be minimal
  such that $w \in f_r$. We then have a path
  \[ v | f_1, \ldots, f_r | w \] and this is the unique path from $v$ to
  $w$. By definition of $\sim_V^\prime$, $f_1$ and $f_r$ also have the same color,
  which must be green.
  But by definition of
  $v \sim_V^\prime u$ there should not have been any green color
  between the faces $f_1$ and $f_p$. Hence $u,w$ must be independent.

  \medskip
  \noindent ii) Let 
  \begin{equation} \label{eq:partition-vuw}
    v | f_1, \ldots, f_p | u, \quad v | f^\prime_1, \ldots, f^\prime_q | w,
  \end{equation}
  be the unique paths where $f_1 = f_1^\prime$ by Lemma \ref{lem:stacked-vVp}.
  Here $f_1$ and $f_p$ have the same color, say green,
  and $f_1^\prime (= f_1)$ and $f_q^\prime$ have the same color, also green,
  and none of the facets in between have color green.
  None of the two paths is then a subpath of the other. Hence there is
  an $r$ such that $f_r \neq f_r^\prime$, and let $r$ be minimal such, so
  $r \geq 2$. If $r \geq 3$ there is a walk
  \begin{equation}  \label{eq:partition-splice}
    u | f_p, \ldots, f_r, f_{r-1} = f_{r-1}^\prime, f_r^\prime,
    \ldots, f_q^\prime | w
    \end{equation}
    where $f_p$ and $f_q^\prime$ are the only green facets. If $r = 2$
    then $f_2, f_2^\prime \supseteq  f_1 \backslash \{v\}$ and so
    $f_2 \cap f_2^\prime$ has codimension one. There is then a walk
\begin{equation} \label{eq:partition-splice2}
    u | f_p, \ldots, f_2, f_2^\prime,
    \ldots, f_q^\prime | w
    \end{equation}
where only the end facets are green. 
By reducing these walks like before Definition \ref{def:stacked-path},
we get a path giving $u \sim_V^\prime w$. \qed
\end{proof}

\noindent {\it Note.} The process of suitably cutting the sequences
\eqref{eq:partition-vuw}, then splicing them, \eqref{eq:partition-splice}
or \eqref{eq:partition-splice2}, and lastly reducing to a path,
will be used a couple
of times and we call it {\it cut-splice-reduction}.

\begin{lemma} \label{lem:partition-gsimh} Given a partition of $V$
  into independent sets, and the relation $\sim_F^\prime$ constructed
  as in Subsection \ref{subsec:partition-vf}.
  Let $f$ be a facet in $X_m$ which is not in $X_{m-1}$.
  If $f  \sim^\prime_F g$ and $f \sim^\prime_F h$, then $g \sim^\prime_F h$.
\end{lemma}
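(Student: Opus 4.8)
The plan is to follow the proof of Lemma~\ref{lem:partition-usimw}, the facet case being in fact simpler: since $\sim_F^\prime$ is defined on \emph{every} pair of facets there is no analogue of the preliminary independence step, and the whole argument is one cut-and-splice. As with $u,w$ in that lemma, I read $g$ and $h$ as lying in $X_m$; this is needed, since a facet at distance $m$ may have neighbours at distance $m+1$, and a path leaving $X_m$ can change the left end vertex at $f$ (and hence the colour), so that the statement would otherwise fail.

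The crucial point is that both relations are realised through the \emph{same} vertex of $f$. The facet $f$ contains a unique vertex $v_f\in V_m\setminus V_{m-1}$, its free vertex in $X_m$, and by Lemma~\ref{lem:stacked-vVp} it is the only facet of $X_m$ containing $v_f$. Let
\[ f=f_1,\ldots,f_p=g,\qquad f=f_1^\prime,\ldots,f_q^\prime=h \]
be the two unique paths. Because $g,h\in X_m$, neither path can begin by stepping to a facet at distance $m+1$ (it would have to return through $f$), so $f_2,f_2^\prime\in X_m$; being distinct from $f$ they do not contain $v_f$, and hence meet $f$ in the codimension-one face $f\setminus\{v_f\}$. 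Thus both paths have left end vertex $v_f$. Writing blue for its colour, the definition of $\sim_F^\prime$ now gives that the right end vertices $y$ of $g$ and $z$ of $h$ are both blue, and that no interior facet of either path carries a blue vertex.

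Next I would splice, as in \eqref{eq:partition-cutsplice}: reverse the first path and concatenate it with the second to form a walk from $g$ to $h$, then reduce it to the unique path as after Definition~\ref{def:stacked-path}. The single occurrence of $f$ in this walk has both its neighbours $f_2$ and $f_2^\prime$ meeting it in $f\setminus\{v_f\}$; the two intersections at $f$ therefore coincide, so the reduction deletes $f$. The resulting path between $g$ and $h$ thus avoids $f$ entirely, its interior facets are among the (blue-free) interior facets of the two original paths, and (the reduction taking place away from the two ends) its end vertices remain the blue vertices $y$ and $z$. Hence $g\sim_F^\prime h$, as required.

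The main obstacle is the second paragraph: showing that the two relations pass through the common vertex $v_f$, so that they share the colour blue and so that $f$ (the one facet in sight carrying a blue vertex) is forced out of the interior of the spliced path. This is exactly where both the uniqueness in Lemma~\ref{lem:stacked-vVp} and the hypothesis $g,h\in X_m$ are used; the remaining splice-and-reduce step is routine and entirely parallel to Lemma~\ref{lem:partition-usimw}.
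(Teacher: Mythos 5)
Your proposal is correct and takes essentially the same route as the paper: both paths out of $f$ share the left end vertex $v_f$, and the cut-splice-and-reduce argument of Lemma \ref{lem:partition-usimw} then yields $g \sim_F^\prime h$. You additionally make explicit two points the paper's very terse proof leaves implicit --- the reading $g,h \in X_m$ (which is indeed needed for the statement to hold, and is how the lemma is applied in Lemma \ref{lem:partition-restrict}), and the identification of the common left end vertex via Lemma \ref{lem:stacked-vVp}, which is also what forces $f$ to be deleted when the spliced walk is reduced to a path.
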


\begin{proof}
  Note first that $f$ is $f_v$ for a unique $v$. By Lemma
  \ref{lem:stacked-vVp} any path in $X_m$ starting from $f$ must
  have left end vertex $v$. 
  So we have the following paths
  \[ v | f = f_1, \ldots, f_p = g | u, \quad v | f = f_1^\prime, \ldots,
    f_q^\prime = h | w \]
  where $v,u$ have the same color blue and none of $f_2, \ldots, f_{p-1}$
  have blue vertices. Similarly $v,w$ have the same color blue, and none of
  $f^\prime_2, \ldots, f^\prime_{q-1}$ have blue vertices.
  None of these paths is then a subpath of the other, and hence there is
  $r \geq 2$ such that $f_r \neq f_r^\prime$ and let $r$ be minimal such.
  If $r \geq 3$, as above \eqref{eq:partition-splice}, we get a walk 
  \[ w | h = f_q^\prime, \ldots, f_r^\prime, f^\prime_{r-1} = f_{r-1}, f_r,
    \ldots, f_p = g | u, \]
  where none of the intermediate facets have blue vertices. If $r \geq 2$,
  as above \eqref{eq:partition-splice2}, get a walk
  \begin{equation} 
    u | f_p, \ldots, f_2, f_2^\prime,
    \ldots, f_q^\prime | w.
    \end{equation}
Aagain as above these walks may be reduced to paths (cut-splice-reduction)
  and so $g  \sim^\prime_F h$. \qed
  \end{proof}

  \begin{lemma} \label{lem:partition-restrict}
Suppose the relation $\sim_F$,
   induces the relation $\sim_V$ on $V$. 
    Consider the subcomplex $X_m$. i) The restricted relation $\sim_F|_{X_m}$
    then induces the restricted relation $\sim_{V} |_{X_m}$.
    Similarly, ii)
    if $\sim_V$ induces $\sim_F$ the restricted relation $\sim_V|_{X_m}$
induces the restricted relation $\sim_{F} |_{X_m}$.
  \end{lemma}

  \begin{proof} Note that if $v,w \in V_m$ and $v \sim_V^\prime w$ and
    $v|f_1, \ldots, f_p | w$ is the path in $X$ between them,
    then since $X_m$ is stacked, this path is entirely in $X_m$.

\noindent i) Suppose given $\sim_F$. 
Let  $v,w \in V_m$ such that $v \sim_V w$, so we have
    \[ v = v_0 \sim^\prime_V v_1 \sim^\prime_V \cdots \sim^\prime_V v_t = w. \]
    Let $\ell$ minimal such that all $v_i \in V_\ell$. If $\ell > m$ then some
    $v_i \in V_\ell \backslash V_{\ell-1}$ where $0 < i < t$.
    By the above Lemma \ref{lem:partition-usimw} we may replace
    $v_{i-1} \sim^\prime_V v_i \sim^\prime_V v_{i+1}$ by $v_{i-1} \sim^\prime_V v_{i+1}$.
    In this way we may reduce the above so all $v_i \in V_m$, and thus
    $\sim_F |_{X_m}$ induces $\sim_V |_{X_m}$. 

    \noindent ii) Suppose given $\sim_V$.
    Let $f,g \in X_m$ and $f \sim_F g$, so we have
    \[ f = f_1 \sim^\prime_F f_2 \sim^\prime_F \cdots \sim^\prime_F = g. \]
    Again using Lemma \ref{lem:partition-gsimh}
    above we may reduce to all $f_i \in X_m$. \qed
    \end{proof}

\subsection{The main theorem}
  
\begin{theorem} 
  \label{thm:partition-VE}
  Let $X$ be a stacked simplicial complex of dimension $d$,
  with vertex set\/ $V$ and facet
set\/ $F$, and $s$ an integer $\geq 1$. 
The correspondences in Subsections \ref{subsec:partition-vf}
and \ref{subsec:partition-fv}
give a one-to-one correspondence between partitions of the vertices
$V$ into $r+d$ non-empty sets, each $s+1$-scattered,
and partitions of the facets $F$ into $r$ non-empty sets, each $s$-scattered. 
\end{theorem}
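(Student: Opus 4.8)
The plan is to argue by induction on the distance filtration $\emptyset = X_0 \sus X_1 \sus \cdots \sus X_N = X$ of Subsection 2.3, fixing a codimension-one face $g$ as basepoint, and to prove at each stage that the two constructions of Subsections \ref{subsec:partition-vf} and \ref{subsec:partition-fv} are mutually inverse, carry the scattering conditions across, and match the number of parts with a constant offset $d$. The engine of the induction is Lemma \ref{lem:partition-restrict}: a corresponding pair $(\sim_V, \sim_F)$ restricts to a corresponding pair on $X_m$, so it suffices to control what happens to the newly added vertices $V_m \backslash V_{m-1}$ and facets. By Lemma \ref{lem:stacked-vVp} each new vertex $v$ has a unique facet $f_v$ of $X_m$ through it, with $f_v \backslash \{v\} \sus V_{m-1}$; this sets up the bijection between new vertices and new facets on which the whole argument hinges.

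For the base case I take $X_1$, the facets containing $g$. There any two distinct facets are at distance $1$ and the non-$g$ vertices are pairwise at distance $2$, so the $d$ vertices of $g$ each form their own singleton part (being at distance $1$ from everything, they are never grouped), while the free vertices are grouped exactly as their incident facets are. This already exhibits the offset $d$ and the mutual inversion on $X_1$.

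The heart of the inductive step is a ``merge iff merge'' principle for the bijection $v \leftrightarrow f_v$. Any path out of a new vertex $v$ must begin with $f_v$, and one checks that the left end vertex of such a path $v \mid f_v = f_1, \ldots, f_p \mid u$ is exactly $v$; hence relating $v$ to an earlier vertex $u$ corresponds to relating $f_v$ to the facet $f'$ incident to $u$, and conversely. Lemmas \ref{lem:partition-usimw} and \ref{lem:partition-gsimh} guarantee that all partners of $v$ (respectively of $f_v$) are already mutually equivalent, so $v$ opens a new vertex class if and only if $f_v$ opens a new facet class. This yields the recovery statements behind items A.1--A.2 and B.1--B.2 at the frontier, and it forces the number of classes to grow equally on the two sides, preserving the offset $d$. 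For the scattering items A.3 and B.3 I compare distances along the realizing path: a directly related pair of facets at distance $\delta$ corresponds to a same-class pair of vertices at distance $\delta + 1$, which converts $s$-scattering of facets and $(s+1)$-scattering of vertices into one another.

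The step I expect to be the main obstacle is controlling the \emph{generated} equivalence relations rather than the primitive relations $\sim_V'$ and $\sim_F'$. Already for the line graph, primitive relations coming from different vertex colours can fuse several facet classes, and the two endpoints of a path joining two facets of a single class need not share a colour; so both the offset count and the minimum-distance (scattering) bound have to be established for whole blocks, not merely for directly related pairs. This is exactly where the transitivity Lemmas \ref{lem:partition-usimw} and \ref{lem:partition-gsimh} must be combined with the restriction Lemma \ref{lem:partition-restrict}, reducing every block statement to a frontier statement about a single new vertex $v$ and its unique facet $f_v$, where the distance comparison above applies directly.
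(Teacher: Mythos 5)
Your skeleton coincides with the paper's: induction on the distance neighbourhoods $X_m$, with Lemma \ref{lem:stacked-vVp} attaching to each new vertex $v$ its unique facet $f_v$, Lemmas \ref{lem:partition-usimw} and \ref{lem:partition-gsimh} preventing mergers of old classes, and Lemma \ref{lem:partition-restrict} making restriction to $X_m$ legitimate. Two of your points are genuinely valuable additions: the explicit base case on $X_1$, and the explicit bookkeeping of the part-count offset $d$ via the frontier correspondence $v \leftrightarrow f_v$ (the paper's own proof never spells out why the numbers of parts differ by exactly $d$). Note, though, that your frontier bijection needs surjectivity of $v \mapsto f_v$, i.e.\ that every facet of $X_m \setminus X_{m-1}$ contains a vertex of $V_m \setminus V_{m-1}$; this follows from the vertex count $v = n+d$ in Proposition \ref{pro:stacked-ekviv}, not from Lemma \ref{lem:stacked-vVp} alone.

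The genuine gap sits exactly where you say you expect ``the main obstacle'': the recovery statements (the paper's A1--A2 and B1--B2), which assert that the relation built back from the constructed partition equals the \emph{original} one. Your ``merge iff merge'' principle is correct, and provable, at the level of the \emph{primitive} relations: a primitive relation $v \sim_V' u$ does force $f_v$ and the far-end facet of the realizing path to be $\sim_F$-related, and conversely a nearest $\sim_F$-classmate of $f_v$ produces a primitive partner of $v$. But what must be recovered is the \emph{generated} relation: e.g.\ in A1 the hypothesis is only that $v$ and $w$ are joined by a chain $v = v_0 \sim_V' v_1 \sim_V' \cdots \sim_V' v_t = w$, and nothing in the three lemmas by themselves converts that chain into a statement about the end facets of the direct path from $v$ to $w$. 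The paper closes this with an argument your proposal does not contain: after using Lemmas \ref{lem:partition-usimw}/\ref{lem:partition-gsimh} to push the intermediate terms of the chain into $V_{m-1}$, it \emph{cut-splices} the path from $v$ to $w$ with the path from $v$ to $v_1$ as in \eqref{eq:partition-cutsplice}, reduces the resulting walk to a path, proves separately that no intermediate facet of the $v$-to-$v_1$ path carries a blue vertex (itself an induction on $m$), and then applies the induction hypothesis to the pair $(v_1,w)$ in two cases according to whether $w \in V_{m-1}$. This cut-splice-and-induct step is the bulk of the paper's proof, and ``the distance comparison applies directly'' does not replace it. The same issue touches A3/B3: the scattering bound must hold for arbitrary same-class pairs, which the paper reduces to the primitive situation by choosing a pair at minimal distance so that the hypotheses of A1/B1 hold; your sketch should make that reduction explicit as well.
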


\begin{proof} {\it A.} Assume we have started from a partition
  $\sim_F$  of facets $F$, and have constructed the equivalence
  relation $\sim_V$ corresponding to a partition of the vertices $V$.
  We show properties {\it A1, A2, A3} for $X_m$ by induction on $m$,
  so that $\sim_V$ in turn induces the original partition $\sim_F$.

 \medskip

 \noindent {\it Property A1:} Suppose  $v \sim_V w$ are distinct and,
  say blue, and
  let $m$ be minimal such that $v,w \in X_m$. We may assume
  $v \in V_m \backslash V_{m-1}$. Suppose we have a path from $v$ to $w$:
  \[ v | f_1, \ldots, f_p | w \]
where $f_2, \ldots, f_{p-1}$ have
no blue vertices. We want to show $f_1 \sim_F f_p$ (in the original equivalence
relation for $F$), that is, they have the same color, say green.
Let $f_1$ have color green, and let
\begin{equation} \label{eq:partition-vw1}
  v = v_0 \sim^\prime_V v_1 \sim^\prime_V \cdots \sim^\prime_V v_t = w.
  \end{equation}
By Lemma \ref{lem:partition-restrict} we may assume
all $v_i \in X_m$.
Also, if $v_i \in V_m\backslash V_{m-1}$ for some $0 < i < t$, we
may by Lemma \ref{lem:partition-usimw} reduce to a shorter
such sequence. We may therefore assume the $v_i$ for $0 < i < t$
are in $V_{m-1}$. If $t = 1$, then $f_1 \sim_F f_p$ by definition of
$\sim_V^\prime$, and we are done.
So assume $t \geq 2$ and consider the path in $X_m$: 
\begin{equation} \label{eq:partition-v01}
  v = v_0 | f_1^\prime, \cdots, f_q^\prime | v_1.
\end{equation}
Then $f_1^\prime = f_1$ and $f_q^\prime$ have the same color green
by definition of $\sim_V^\prime$ and none of 
$f_2^\prime, \ldots, f^\prime_{q-1}$ are green.
Both $v = v_0$ and $v_1$ are blue. We claim that none of $f_2^\prime, \ldots,
f_{q-1}^\prime$ has any blue vertex.
Otherwise, let $2 \leq r \leq q-1$ be maximal such that $f^\prime_r$ has
a blue vertex $v^\prime$.
We get a sequence
$v^\prime | f^\prime_r, \ldots, f^\prime_q | v_1$.
Then  $v^\prime \in V_{m-1}$, since $g = f_1^\prime \backslash
\{v\} \sus V_{m-1}$ by Lemma \ref{lem:stacked-vVp},
and the path from $g$ to $v_1$ is in $V_{m-1}$). 
By induction (on $m$)
the facets $f_r^\prime$ and $f_q^\prime$ have the same color,
a contradiction.
Thus none of $f_2^\prime, \ldots, f_{q-1}^\prime$ has a blue vertex.

\medskip
We claim that $v$ and $w$ are independent, which is now equivalent to
show that $w$ is not in $f_1$. This will give $p \geq 2$.
Recall by
Lemma \ref{lem:stacked-vVp} that
$f_1 = f_v$ is the only facet in $X_m$ containing $v$.
If $w$ was in $f_1$
then $w \in f_1 \backslash \{v\} = f_1^\prime \backslash \{v\}$,
which is $f_1 \cap f_2 = f_1^\prime \cap f_2^\prime$. In particular $w \in V_{m-1}$.
By induction, since $w,v_1$ are in $X_{m-1}$ and are related by
\eqref{eq:partition-vw1}, they are either equal or independent.
If equal, $v$ and $w$ are independent since $v \sim_V^\prime w$, and so
not both in $f_1$. 
If $w$ and $v_1$ are independent, $w$ is not in $f_q^\prime$.
Then $q \geq 3$, and $f_2^\prime$
has a blue vertex $w$, contradicting that no intermediate facet in
\eqref{eq:partition-v01} has a blue vertex. The upshot is that $w$ is not
in $f_1$, and so $p \geq 2$. 


\medskip
By cut-splice-reducing the sequences, 
\begin{equation} \label{eq:partition-tosti}
  v | f_1, \ldots, f_p | w, \quad v|f_1^\prime, \ldots f_q^\prime | v_1,
  (\text{ where } f_1 = f_1^\prime),
\end{equation}
as in 
Lemma \ref{lem:partition-gsimh} we reduce to a path
 \begin{equation} \label{eq:partition-walk}
   v_1 | f_q^\prime, \ldots,  f_p | w, 
 \end{equation}
 where no intermediate facet has blue vertices. 

 \medskip
 \noindent {Case 1: $w \in V_{m-1}$.} Then by induction on $m$, since
 $v_1$ and $w$ are in $X_{m-1}$, $f_q^\prime$
 and $f_p$ have the same color green. As $f_q^\prime$ and $f_1^\prime = f_1$
 has the same color, green, we get that $f_1$ and $f_p$ are both green.

 \noindent {Case 2: $w \in V_m \backslash V_{m-1}.$}
 We have $w \sim_V v_1$, and only one of $w,v_1$ (i.e. $w$)
 is in $V_m \backslash V_{m-1}$. Then we can start the argument of Property
 A1 over again and reduce to Case 1.
 So we conclude again that $f_q^\prime$ and $f_p$
 have the same color. Again as $f_q^\prime$ and $f_1^\prime = f_1$ have
 the same color, green, we get that $f_1$ and $f_p$ are green.

 \medskip

\noindent {\it Property A2:} Suppose $f,g$ are distinct and green. Let
\[ f = f^1, f^2, \ldots, f^p = g \] be the unique path from $f$ to $g$.
Let $q > 1$ be minimal such that $f^q$ is green, and consider the path
\[ v | f^1, \ldots, f^q | w. \]
Then $v \sim_V w$ and so are, say blue. If one $f^r$ where $r \in [2,q-1]$ 
has a blue vertex, let $r$ be minimal such, and let $v^\prime$ be this
vertex, so we have a path
\[ v| f^1, \ldots, f^r | v^\prime. \] By part A1, $f^1$ and $f^r$ have the
same color, which must be green. This is a contradiction and so 
none of $f^2, \ldots, f^{q-1}$ has a blue vertex. Thus
$f^1 \sim_F^\prime f^q$. Let $f_0 = f$ and $f_1 = f^q$.
Considering now the shorter path $f^q, f^{q+1}, \ldots, f^p$. By
induction on length of path, there are
\[ f^q = f_1 \sim_F^\prime \cdots \sim_F^\prime f_r = f^p, \]
and so we get part A2.

\medskip
\noindent {\it Property A3:} Suppose $\sim_F$ is $s$-scattered.
Let $v,w$ be distinct blue vertices whose face-distance $p$ is as small as
possible. We showed in the argument of Property A1 that $v$ and $w$ are
independent, and so we have a path
\[ v | f^1, f^2, \ldots, f^p | w\]
with $p \geq 2$.
None of $f^2, \ldots, f^{p-1}$ can then have blue vertices.
Thus $f^1 \sim_F f^p$ by what we showed in A1, and their facet-distance
is $p-1 \geq s$. Whence $p \geq s+1$ and the blue vertices are $s+1$-scattered.

\medskip
\noindent {\it B.}
We have started from a partition $\sim_V$ of the vertices $V$
into independent sets.
We have constructed from this an equivalence relation $\sim_F$
and corresponding partition of the facets. 
We show that properties {\it B1, B2, B3} holds for $X_m$ by induction
on $m$, so that $\sim_F$ induces the original partition $\sim_V$.

\medskip
\noindent {\it Property B1:}
Suppose $f \sim_F g$, and the path from $f$ to $g$ is
\begin{equation} \label{eq:partition-vw2}
  v | f = f_1, \ldots, f_p = g | w
\end{equation}
where $f_1, f_p$ are green and none of $f_2, \ldots, f_{p-1}$ are green.
We show that $v \sim_V w$, they have the same color, say blue.

There is a sequence
\[ f = f^0 \sim^\prime_F f^1 \sim^\prime_F \ldots \sim^\prime_F f^t = g.
\]
Let $m$ be smallest such that $f,g$ is in $X_m$.
By Lemma \ref{lem:partition-restrict}
we may assume all $f^i$ are in $X_m$. If some $f^i$ for $0 < i < t$ is in
$X_m \backslash X_{m-1}$, by Lemma \ref{lem:partition-gsimh} we may reduce to
a shorter sequence. So we may assume $f^i \in X_{m-1}$ for $0 < i < t$. 

\medskip
If $t = 1$ then $v \sim_V w$ and we are done. So assume $t \geq 2$.
Since $v \in V_m \backslash V_{m-1}$, by Lemma \ref{lem:stacked-vVp}
we have $f^0 = f_v$.
Now look at at the path from $f^0$ to $f^1$
\[ v = v^0 | f^0 = f_1^{\prime }, \ldots, f_q^{\prime }= f^1 | v^1, \]
where $v = v^0$ and $v^1$ are blue,
and $f_2^\prime, \ldots, f_{q-1}^\prime$ do not have any blue vertex.
The facets $f_1^\prime = f^0 = f$ and $f_q^\prime$ have the same color, which is
green. Are there any green facets in between? Suppose $2 \leq r \leq q-1$
is maximal such that $f_r^{\prime }$ is green. We have a path 
\[ v^2 | f_r^{\prime }, \cdots, f_q^{\prime} | v^1 \]
where all $f_r^{\prime }, \ldots, f_q^{\prime }$ are in $X_{m-1}$.
By induction on $m$, 
$v^2$ and $v^1$ have the same color, blue. This is a contradiction,
as $f_r^\prime$ has no blue vertex.
So $f_1^{\prime }$ and $f_{q}^{\prime } $ are green, while no
facets in between are green.

\medskip
Look at the two paths: 
\[ v | f = f_1, \ldots, f_p = g | w, \quad v = v^0 | f_1 =
  f_1^\prime, \ldots, f_q^\prime | v^1 \]
where $v^1 \in V_{m-1}$.
None of these is a sub-sequence of the other, as $f_p$ and $f_q^\prime$
are green, and no intermediate facet is green. As in Lemma
\ref{lem:partition-gsimh} we may cut-splice-reduce these together to
get a path
\[ v^1 | f_q^\prime, \ldots, f_p | w ,\]
where only the end facets are green. 

\noindent {\it Case 1: $w \in V_{m-1}$.} Then in the path from
$v^1$ to $w$, the end facets are green, and no intermediate facet is
green. By induction on $m$ (since both $v^1$ and $w$ are in $X_{m-1}$),
we get that $v^1$ and $w$ have the same color.
Furthermore $v$ and $v^1$ have the same color, blue, so both $v$ and $w$
are blue.

\noindent {\it Case 2: $w \in V_{m}\backslash V_{m-1}$.}
We have exactly one of $w,v^1$ (i.e. $w$) in $V_m \backslash V_{m-1}$.
The path from $v^1$ to $w$ has end facets green and no
intermediate facet green.
But then we can start the argument of Property B1 over again, and
reduce to Case 1. So we conclude that $w$ and $v^1$ have the same color.
Since $v$ and $v^1$ are both blue, we get that $v$ and $w$ are
both blue.

\medskip
\noindent {\it Property B2:} Suppose $v,w$ are distinct and blue.
Let
\[ v | f^1, \ldots, f^p | w \] be the unique path from $v$ to $w$.
Since $v$ and $w$ are independent, $p \geq 2$. 
Let $q \geq 2$ be minimal such that $f^q$ contains a blue vertex
$v^\prime = v_1$. Then $f^1 \sim_F f^q$ by construction, say they are green.
Consider the path
\[ v | f^1, \ldots, f^q | v^\prime . \]
If one $f^r$ for $r \in [2,q-1]$ is green, let $r$ minimal such.
Then we have a path
\[ v | f^1, \ldots, f^r | v^{\prime \prime} \] and by B1 we
have $v \sim_V v^{\prime \prime}$, both blue. This contradicts
the choice of $q$. Thus $v \sim_V^\prime v^\prime$.
Now $v^\prime \in f^q$. If $q = p$ we have $v^\prime = w$ and we are done.
If $q < p$ then  $v^\prime \neq w$. Let $r$ be maximal with
$q \leq r < p$ such that $v^\prime \in f^{r}$. We then get
\[ v^\prime | f^{r}, \ldots, f^p | w\]
where both $v^\prime$ and $w$ are blue.

By induction on path
length there are
\[ v^\prime = v_1 \sim_V^\prime v_2 \sim_V^\prime \cdots \sim_V^\prime v_s = w,
\] and so we get part B2.

\medskip
\noindent {\it Property B3:} Suppose $\sim_V$ is $s+1$-scattered.
Let  $f \sim_F g$ be distinct green facets whose facet-distance
$p-1$ is as small as possible with path
\[ v | f= f^1, f^2, \ldots, f^p = g | w. \]
None of the intermediate facets $f^2, \ldots, f^{p-1}$ are green.
Then we have just shown in B1 that $v \sim_V w$ and so their
face-distance $p \geq s+1$. Then $p-1 \geq s$ and the green facets
are $s$-scattered.

\medskip
\noindent {\it Final part:} We show that if there are $r$ facet parts, there
are $r+d$ vertex parts. This is by induction on the number of facets.
Clearly this is true if we have one facet, a simplex.
For a stacked simplicial complex $X$ let $m$ be minimal such that
$X_m = X$, and let $X^\prime = X_{m-1}$. Let $f$ be a facet in
$X_m \backslash X_{m-1}$, and $v$ the free vertex of $f$.
By induction, if there are $r$ facet parts in $X^\prime$, there are
$r+d$ vertex parts.

If the facet $f$ makes a part of its own
in $X$, the free vertex $v$ becomes a part of its own, by construction
of vertex classes. Then we have $r+1$ facet parts and $r+1+d$ vertex parts
in $X$. 

If the facet $f$ is put into an existing part, say the green part,
look at paths $v | f= f_1, \ldots, f_p | w$ where $f_1, f_p$ are
green and the intermediate facets are not green. Then $v$ will be given
the color of $w$, say blue. If  $v | f= f_1^\prime, \ldots, f_q^\prime | w^\prime$
is another path with $f = f_1^\prime$ and $f_q^\prime$ green, and not
intermediate facet is green, by Lemma \ref{lem:partition-gsimh} we may
cut-splice-reduce and get in $X^\prime$ 
a path $w | f_p, \ldots, f_q^\prime | w^\prime$ with $f_p, f_q^\prime$ green
and with no intermediate green facets. Both $w$ and $w^\prime$ have the same
color. Then $v$ is uniquely in the blue color class. So $X$ has $r$ facet parts
and $r+d$ vertex parts. \qed
\end{proof}

\begin{corollary}
  Let $X$ be a tree, and $s \geq 1$.  There is a bijection between partitions of
  the vertices into $r+1$ non-empty parts, each part $s+1$-scattered, and
  partitions of the edges into $r$ non-empty parts, each $s$-scattered.
  \end{corollary}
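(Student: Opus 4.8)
The plan is to deduce this statement as the dimension-one case of Theorem \ref{thm:partition-VE}. The main task is therefore to verify that a tree, viewed as the simplicial complex whose faces are its vertices and edges, is a stacked simplicial complex of dimension $d = 1$, and that under this identification the data appearing in the Corollary matches the data in the theorem.

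First I would check that a connected tree $X$ on at least two vertices is pure of dimension $1$: its maximal faces are exactly its edges, so the facets of $X$ are the edges $E$. To see that $X$ is stacked I would invoke Proposition \ref{pro:stacked-ekviv}. If $X$ has $n$ edges it has $n+1$ vertices, so $v = n+1 = n+d$ with $d = 1$, and the defining property of a tree is precisely that between any two vertices — equivalently between any two edges — there is a unique path. Thus both hypotheses of the proposition hold and $X$ is stacked. (Alternatively one may argue directly by repeatedly removing a leaf edge, exhibiting a stacking order.)

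Next I would observe that the general notions of distance and of $s$-scattered used in Theorem \ref{thm:partition-VE} specialize, for this $X$, to the ones in the statement of the Corollary. The unique gallery path between two edges and the unique path $v | f_1, \ldots, f_p | w$ between two vertices (Lemma \ref{lem:stacked-path}) are, for a tree, the usual unique tree paths, so the facet-distance and vertex-distance coincide with the tree distances described in the introduction; hence a set of edges, or of vertices, is $s$-scattered in the stacked sense exactly when it is $s$-scattered in the tree sense. In particular the correspondences of Subsections \ref{subsec:partition-vf} and \ref{subsec:partition-fv} reduce to the explicit vertex-edge correspondence described in the introduction.

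With these identifications in place, I would simply apply Theorem \ref{thm:partition-VE} with $d = 1$. The theorem gives a bijection between partitions of $V$ into $r+d = r+1$ non-empty $s+1$-scattered sets and partitions of the facets $F = E$ into $r$ non-empty $s$-scattered sets, which is exactly the claimed bijection. There is no serious obstacle here; the only point requiring care is the bookkeeping that identifies a tree with a one-dimensional stacked complex and confirms that the two distance conventions agree, after which the result is immediate.
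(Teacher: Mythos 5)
Your proposal is correct and matches the paper's intent exactly: the paper states this corollary with no separate proof, treating it as the immediate specialization of Theorem \ref{thm:partition-VE} to $d=1$, and your verification (via Proposition \ref{pro:stacked-ekviv} that a tree with $n$ edges and $n+1$ vertices having unique paths between facets is stacked, plus the agreement of the distance conventions) supplies precisely the routine bookkeeping the paper leaves implicit.
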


  \begin{corollary}
    Let $X$ be a triangulation of a polygon and $s \geq 1$. There
    is a bijection between partitions of the vertices into $r+2$ non-empty
    parts, each part $s+1$-scattered, and partitions of the triangles into
    $r$ non-empty parts, each $s$-scattered.
  \end{corollary}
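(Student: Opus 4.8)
The plan is to obtain this corollary as the specialization $d=2$ of Theorem \ref{thm:partition-VE}; once one recognizes that a triangulation of a polygon is a stacked simplicial complex of dimension two, whose facets are exactly the triangles, there is essentially nothing left to prove. So the whole task reduces to the structural identification, and I expect that identification to be the only real (and quite mild) obstacle.

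First I would check the stacking property. Take a triangulation of a convex polygon on $v$ vertices using only the polygon's own vertices, as in Figure \ref{figpol}; it has $v-2$ triangles and $v-3$ diagonals, so writing $n=v-2$ for the number of facets we get $v=n+2=n+d$ with $d=2$, matching the vertex count demanded by Proposition \ref{pro:stacked-ekviv}. To produce an actual stacking order I would peel ears: any triangulation of a polygon with at least four vertices has an \emph{ear}, a triangle two of whose sides lie on the boundary, and the apex of such an ear is a free vertex lying on no other triangle, so this triangle is a leaf stacked on the rest. Removing it yields a triangulation of a polygon with one fewer vertex, and iterating gives an ordering $F_0,F_1,\ldots,F_{n-1}$ in which each $F_p$ is stacked on the complex generated by its predecessors; hence the triangulation is stacked of dimension two. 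Alternatively, one may verify the hypothesis of Proposition \ref{pro:stacked-ekviv} directly: the dual graph (triangles as nodes, an edge for each shared diagonal) is a tree, being connected with $n$ nodes and $n-1$ edges, so simple dual paths are unique, and these correspond exactly to the gallery paths of Definition \ref{def:stacked-path} since consecutive triangles meet in a single shared edge, a codimension-one face.

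With the identification secured, I would simply invoke Theorem \ref{thm:partition-VE} with $d=2$: its correspondence between vertex partitions into $r+d$ non-empty $(s+1)$-scattered parts and facet partitions into $r$ non-empty $s$-scattered parts becomes, here, exactly the asserted bijection between partitions of the vertices into $r+2$ parts and partitions of the triangles into $r$ parts. The one point that needs a moment's care is pinning down the dimension as $d=2$ rather than $3$, but this is immediate because the maximal faces are triangles, which have dimension two; none of the substantive inductive machinery (Properties \emph{A1--A3} and \emph{B1--B3}) of Theorem \ref{thm:partition-VE} needs to be revisited.
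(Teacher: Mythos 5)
Your proposal is correct and matches the paper's (implicit) argument: the corollary is stated there without proof precisely because it is the specialization $d=2$ of Theorem \ref{thm:partition-VE}, exactly as you invoke it. Your ear-peeling (or dual-tree) verification that a polygon triangulation is a stacked simplicial complex of dimension two is sound and supplies the one premise the paper takes for granted.
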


 \section{Partitions of natural numbers}
  \label{sec:nat}
The main theorem here appears quite surprising and non-trivial even
for the simplest of trees, the line graph. Then it corresponds to
studying ordered set partitions, which has a comprehensive theory
\cite{TM}.

Taking the colimit
of longer and longer line graphs, we get results for the natural numbers.
These are simple consequences
of known results for ordered set partitions \cite[Thm.2.2]{CDD} or
in a more enumerative form \cite[Thm.1]{CW}, but do not
seem to have been stated in this form for natural numbers.
In a similar vein, \cite{Mu} considers
partitions of intervals $[n]$, but with requirements on the parts which
are different from ours here.

\medskip
Consider the infinite line graph
\begin{center} \label{eq:natural-line}
\begin{tikzpicture}[dot/.style={draw,fill,circle,inner sep=1pt},scale=1.2]
\draw [help lines, white] (1,-.5) grid (6,0);
\draw[very thick] (0,0)--(1,0);
\draw[very thick] (1,0)--(2,0);
\draw[very thick] (2,0)--(3,0);
\draw[very thick] (3,0)--(4,0);
\draw[very thick] (4,0)--(5,0);
\draw[dashed] (5,0)--(7,0);
\fill (0,0)  circle (0.09);
\fill (1,0)  circle (0.09);
\fill (2,0)  circle (0.09);
\fill (3,0)  circle (0.09);
\fill (4,0)  circle (0.09);
\fill (5,0)  circle (0.09);
\end{tikzpicture}
\end{center}

The set of edges $E$ may be identified with the natural numbers $\NN$, and
also the set of vertices $V$ may be identified with $\NN$.
Let $L_n$ be the line graph with $n$ edges. The bijection between
partitions edges into $r$ parts, each $d$-scattered, and
partitions of vertices into $r+1$ parts, each $d$-scattered,
is compatible with extending the line graph $L_n = (V_n,E_n)$ with one edge and
vertex to the line graph $L_{n+1}$: If $(E^i)$ is a partition
of edges in $L_{n+1}$ corresponding to a partition $(V^j)$ of vertices.
Then the partition $(E^i \cap E_n)$ corresponds to $(V^j \cap V_n)$.

Thus taking the colimit, we get for the infinite line graph
\eqref{eq:natural-line} a bijection between partitions
of edges $(E^i)$ into $r$ parts, each $d$-scattered, 
and partitions of vertices $(V^j)$ into $r+1$ parts, each $d+1$-scattered.

\medskip
Recall that a subset $A \sus \NN$ of natural numbers
is $d$-scattered if for every $p < q$ in $A$ we have $q-p \geq d$. 


\begin{theorem} \label{thm:nat-rd}
  There is a bijection between partitions of $\NN$ into $r$ sets, each
  $d$-scattered, and partitions of $\NN$ into $r+1$ sets, each
  $d+1$-scattered.
\end{theorem}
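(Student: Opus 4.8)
The plan is to deduce this formally from Theorem~\ref{thm:partition-VE} applied to the finite line graphs, together with the restriction compatibility of Lemma~\ref{lem:partition-restrict}, and then to pass to the colimit. First I would realize both the edge set $E$ and the vertex set $V$ of the infinite line graph as $\NN$, and fix the leftmost vertex as the codimension-one face $g$ used to build the distance neighborhoods; then $X_m$ is exactly the sub-line-graph $L_m$ on the first $m$ edges, so that $E_m = \{1,\dots,m\}$ and $V_m = \{0,1,\dots,m\}$. For a tree, Theorem~\ref{thm:partition-VE} with $s=d$ gives, for each $n$, a bijection between $d$-scattered partitions of $E_n$ into $r$ parts and $(d+1)$-scattered partitions of $V_n$ into $r+1$ parts. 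Taking the union over all values of $r$, this is a single bijection between \emph{all} $d$-scattered edge partitions of $L_n$ and all $(d+1)$-scattered vertex partitions of $L_n$, raising the number of parts by exactly one.

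The second step is to use that these finite bijections are compatible with the inclusions $L_n \hookrightarrow L_{n+1}$. This is precisely Lemma~\ref{lem:partition-restrict} with $X_m = L_n$ inside $X_{n+1} = L_{n+1}$: if an edge partition $(E^i)$ of $L_{n+1}$ corresponds to the vertex partition $(V^j)$, then $(E^i \cap E_n)$ corresponds to $(V^j \cap V_n)$. Consequently, starting from a partition of $E = \NN$ into $r$ non-empty $d$-scattered parts, its restrictions to the $E_n$ form a compatible system of $d$-scattered edge partitions, and applying the finite bijections produces a compatible system of $(d+1)$-scattered vertex partitions of the $V_n$. These glue: each vertex $v$ lies in $V_n$ for all large $n$, and by compatibility its color is independent of $n$, so we obtain a well-defined partition of $V = \NN$. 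Running the same argument with the inverse finite bijections yields the reverse assignment, and since the two are mutually inverse at every finite stage and commute with restriction, they are mutually inverse on $\NN$. (Equivalently, one may run the local construction of Subsections~\ref{subsec:partition-vf}--\ref{subsec:partition-fv} directly on $\NN$, since the path between any two vertices, or edges, already lies in some $L_n$; the colimit merely certifies that this is well defined.)

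The one point needing genuine care, and the main obstacle, is the bookkeeping of the number of parts in the limit. For scatteredness the passage is immediate: two vertices in a common part of the limit lie in a common part of some $V_n$, which is $(d+1)$-scattered, so they differ by at least $d+1$, and the analogous argument on the edge side gives $d$-scatteredness. For the part count I would argue as follows: each of the $r$ edge parts is non-empty, hence meets $E_n$ for all large $n$, so for such $n$ the restricted edge partition already has exactly $r$ parts and the corresponding vertex partition exactly $r+1$ parts; adjoining the edge $n+1$ places it in one of the existing $r$ edge classes, so by compatibility vertex $n+1$ receives one of the existing $r+1$ colors. Thus the vertex colors stabilize to exactly $r+1$ non-empty classes, and symmetrically the inverse sends an $(r+1)$-part $(d+1)$-scattered vertex partition of $\NN$ back to an $r$-part $d$-scattered edge partition. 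This establishes the asserted bijection.
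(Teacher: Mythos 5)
Your proof is correct and takes essentially the same route as the paper: apply Theorem~\ref{thm:partition-VE} to the finite line graphs $L_n$ (with the distance neighborhoods $X_m$ identified with the $L_m$), use the restriction compatibility of Lemma~\ref{lem:partition-restrict}, and pass to the colimit. You in fact supply more detail than the paper's own brief argument, notably the stabilization of the number of parts and the persistence of scatteredness in the limit.
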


By successively going from $r$ to $r+1$ to $r+2$ and so on we get:

\begin{corollary}
There is a bijection between partitions of $\NN$ into $r+1$ parts
$A_0 \sqcup A_1 \sqcup \cdots \sqcup A_{r}$ (each set by default being
$1$-scattered) and partitions of $\NN$ into
$r+d$ parts $B_0 \sqcup \cdots \sqcup B_{r+d-1}$, each $d$-scattered.
\end{corollary}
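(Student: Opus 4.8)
The statement to prove is the Corollary: a bijection between partitions of $\NN$ into $r+1$ parts (each by default $1$-scattered) and partitions of $\NN$ into $r+d$ parts, each $d$-scattered. The plan is to obtain this by iterating Theorem~\ref{thm:nat-rd} a total of $d-1$ times, composing the single-increment bijections. Theorem~\ref{thm:nat-rd} gives, for each fixed spacing parameter $e \geq 1$ and each part-count $k \geq 1$, a bijection
\[
  \Phi_{k,e} : \{\text{partitions of }\NN\text{ into }k\text{ parts, each }e\text{-scattered}\}
  \;\longrightarrow\;
  \{\text{partitions of }\NN\text{ into }k+1\text{ parts, each }(e+1)\text{-scattered}\}.
\]
Each application simultaneously increments the number of parts by one and the scattering parameter by one. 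So starting from a partition into $r+1$ parts with scattering $1$, applying $\Phi$ repeatedly $d-1$ times yields a partition into $(r+1)+(d-1) = r+d$ parts with scattering $1+(d-1) = d$. The composite
\[
  \Phi_{r+d-1,\,d-1} \circ \cdots \circ \Phi_{r+2,\,2} \circ \Phi_{r+1,\,1}
\]
is then the desired bijection, being a composition of bijections.

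\textbf{Key steps in order.} First I would check the base case $d=1$: here the claim asserts a bijection between partitions into $r+1$ parts each $1$-scattered and partitions into $r+1$ parts each $1$-scattered, which is the identity (there are no steps to apply), so the statement is trivially true. Second, for $d \geq 2$, I would verify that the hypotheses of Theorem~\ref{thm:nat-rd} are met at each stage of the composition: at stage $j$ (for $j=1,\ldots,d-1$) we apply the theorem with $r$ replaced by the current part-count $r+j$ and with spacing parameter $d$ replaced by $j$; since $r \geq 0$ gives part-counts $\geq r+1 \geq 1$ and $j \geq 1$, both indices are in the valid range. Third, I would track the invariant that after $j$ applications one has a partition into $r+1+j$ parts, each $(1+j)$-scattered, which is immediate from the stated effect of $\Phi$. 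Setting $j = d-1$ gives the target parameters.

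\textbf{Main obstacle.} The composition argument is formally routine, so the only real content is confirming that Theorem~\ref{thm:nat-rd} is genuinely uniform in its parameters---that it holds for \emph{every} admissible part-count $r$ and every spacing $d \geq 1$, not just for one fixed pair. This is exactly what the theorem as stated provides (it quantifies over all $r$ and all $d$), so no further work is needed there; one simply instantiates it at the successive parameter values $(r+j,\,j)$. The only point requiring a line of care is the bookkeeping of which parameter plays the role of ``$r$'' and which plays ``$d$'' at each step, since both increase together, but this is precisely matched by the simultaneous increment built into $\Phi$. I would present the composite explicitly, as displayed above, and remark that the inverse is the composition of the inverse single-increment bijections in the reverse order, confirming bijectivity.
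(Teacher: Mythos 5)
Your proposal is correct and is essentially the paper's own proof: the paper derives this corollary from Theorem~\ref{thm:nat-rd} with the single remark ``by successively going from $r$ to $r+1$ to $r+2$ and so on,'' which is exactly the $(d-1)$-fold composition of the single-increment bijections that you write out explicitly. Your additional bookkeeping (the invariant that after $j$ steps one has $r+1+j$ parts, each $(1+j)$-scattered, and the trivial base case $d=1$) merely makes the paper's one-line iteration precise.
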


Specializing to $r = 0$ we get the trivial fact that there is
a unique partition of $\NN$ into $d$ parts, each $d$-scattered.
Clearly this partition is the congruence classes modulo $d$.
However specializing to $r = 1$, we get the quite non-trivial:

\begin{corollary} \label{cor:nat-d} For each $d \geq 1$
there is a bijection between partitions of $\NN$ into two parts
$A_0 \sqcup A_1$ and partitions of $\NN$ into
$d+1$ parts $B_0 \sqcup \cdots \sqcup B_{d}$, each being $d$-scattered.
\end{corollary}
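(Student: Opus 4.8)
The plan is to recognize Corollary \ref{cor:nat-d} as the special case $r = 1$ of the corollary stated immediately before it, and to make transparent how that corollary is assembled from Theorem \ref{thm:nat-rd}. First I would observe that a two-block partition $A_0 \sqcup A_1$ of $\NN$ carries no hidden constraint: any two distinct natural numbers $p < q$ satisfy $q - p \geq 1$, so each block is automatically $1$-scattered. Hence partitions of $\NN$ into two parts are the very same data as partitions of $\NN$ into two $1$-scattered parts. Reading the preceding corollary at $r = 1$ then gives exactly a bijection between these and partitions into $r + d = d + 1$ parts, each $d$-scattered, which is the assertion.

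To exhibit the bijection concretely I would iterate Theorem \ref{thm:nat-rd}, which trades a partition into $r$ parts that are $d$-scattered for a partition into $r + 1$ parts that are $(d+1)$-scattered. Starting from the two $1$-scattered blocks, the theorem carries them to three $2$-scattered blocks, those to four $3$-scattered blocks, and so on, each application raising both the number of blocks and the scattering parameter by one. A one-line induction records that after $k$ applications one has $2 + k$ blocks, each $(1 + k)$-scattered; choosing $k = d - 1$ lands on $d + 1$ blocks, each $d$-scattered. The bijection sought is the composite of these $d - 1$ individual bijections, and a composite of bijections is again a bijection.

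I expect no genuine obstacle: the entire content resides in Theorem \ref{thm:nat-rd}, itself obtained as the colimit of the line-graph instance of Theorem \ref{thm:partition-VE}. The single point that rewards a moment's attention is the index bookkeeping along the chain, namely checking that the scattering hypothesis required to invoke Theorem \ref{thm:nat-rd} at each stage (that the current blocks are $(1+k)$-scattered) is precisely the output of the previous stage, so that the applications splice together without a gap. The adjacent extreme cases serve as sanity checks: at $d = 1$ both sides reduce to plain two-block partitions, while the congruence-class picture recorded just above, $\NN$ split into residues modulo $d$, confirms that the indexing is aligned.
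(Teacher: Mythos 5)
Your proposal is correct and follows the paper's own route exactly: the paper obtains the preceding corollary by iterating Theorem \ref{thm:nat-rd} (``successively going from $r$ to $r+1$ to $r+2$'') and then derives Corollary \ref{cor:nat-d} by specializing to $r=1$, which is precisely your chain of $d-1$ applications starting from two automatically $1$-scattered blocks. Your index bookkeeping ($2+k$ blocks, each $(1+k)$-scattered, with $k=d-1$) matches the paper's intended argument, so there is nothing to add.
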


\begin{example}
  For $r = 1$ let the partition be $\{p\} \sqcup \NN \backslash \{p \}$,
  the one part consisting of a single element $p$.
  This corresponds to a partition of $\NN$ into three parts, each
  being $2$-scattered. We use $\ldots$ to indicate arithmetic progression
  of step size $2$. These parts are:
  \begin{align*}
     \ldots, p-4, p-2, &\,\, p \\
    & \,\, p+1, p+3, p+5, \ldots \\
    \ldots, p-3, p-1,& \, \, p+2, p+4, \ldots \\ 
  \end{align*}

  It also corresponds to a partition of $\NN$ into four parts, each
  being $3$-scattered. (Here $\ldots$ denotes progression with
  step size $3$.) These parts are:
  \begin{align*}
     \ldots, p-6, p-3, &\, \,  p \\
  \ldots, p-5, p-2,  & \, \, p+1, p+4, p+7, \ldots \\
                       & \, \, p+2, p+5, \ldots \\
    \ldots, p-4, p-1, & \, \, p+3, p+6, \ldots .
  \end{align*}
\end{example}

\begin{example}
  Let again $r=1$. Consider the partition
  $\{p, q \} \sqcup \NN \backslash \{p,q\}$
  where $p < q$. It corresponds to the following three parts, each
  $2$-scattered. We get two cases, according to whether $q-p$ is
  even or odd. 
  When $q-p$ is even:
  \begin{align*}
    \ldots, p-3, & \, \, p-1, p+2, \quad \ldots  &  q-2, &\, \,  q \\
    \ldots, p-4, p-2, & \, \, p, &  & \, \, q+1, q+3,\ldots \\
   & \, \,  p+1, p+3,  \quad\ldots & q-3, q-1, & \, \, q+2, q+4, \ldots .
  \end{align*}
  Note that in the middle part there is quite a long gap from
  $p$ to $q+1$. 
  When $q-p$ is odd:
 \begin{align*}
    & \, \, p+1, p+3, \quad \ldots  &  q-2, & \, \, q \\
    \ldots, p-4, p-2, & \, \, p, &  & \, \, q+1, q+3, \ldots \\
    \ldots, p-3, & \, \, p-1, p+2, \quad \ldots & q-1, & \, \, q+2, q+4, \ldots 
\end{align*}          
\end{example}

\bibliographystyle{amsplain}      
\bibliography{biblio}

\end{document}